\newtheorem{proposition}{Proposition}
\newtheorem{lemma}[proposition]{Lemma}
\newtheorem{corollary}[proposition]{Corollary}
\newtheorem{theorem}[proposition]{Theorem}
\newtheorem{remark}[proposition]{Remark}
\theoremstyle{definition}
\newtheorem{example}[proposition]{Example}
\newtheorem{definition}[proposition]{Definition}
\newcommand{\Z}{\mathbb{Z}}
\newcommand{\F}{{\mathbb F}}
\newcommand{\RR}{{\mathbb R}}
\newcommand{\mm}{\mathcal{M}}
\newcommand{\CC}{\mathcal{C}}
\newcommand{\rng}[1]{\langle #1 \rangle}
\begin{document}

\title{Compressed commuting graphs of matrix rings}

\author[I.-V. Boroja]{Ivan-Vanja Boroja \orcidlink{0000-0003-1836-0381}}
\address{University of Banja Luka, Faculty of Architecture, Civil Engineering and Geodesy, Bosnia and Herzegovina}
\email{ivan-vanja.boroja@etf.unibl.org}

\author[H. R. Dorbidi]{Hamid Reza Dorbidi \orcidlink{0000-0002-8736-0476}
}
\address{Department of Mathematics, Faculty of Science, University of Jiroft, Iran}
\email{hr\_dorbidi@ujiroft.ac.ir}

\author[D. Kokol Bukov\v sek]{Damjana Kokol Bukov\v{s}ek \orcidlink{0000-0002-0098-6784}}
\address{University of Ljubljana, School of Economics and Business, and Institute of Mathematics, Physics and Mechanics, Ljubljana, Slovenia}
\email{damjana.kokol.bukovsek@ef.uni-lj.si}

\author[N. Stopar]{Nik Stopar \orcidlink{0000-0002-0004-4957}}
\address{University of Ljubljana, Faculty of Civil and Geodetic Engineering, and Institute of Mathematics, Physics and Mechanics, Ljubljana, Slovenia}
\email{nik.stopar@fgg.uni-lj.si}

\begin{abstract}
In this paper we introduce compressed commuting graph of rings. It can be seen as a compression of the standard commuting graph (with the central elements added) where we identify the vertices that generate the same subring. The compression is chosen in such a way that it induces a functor from the category of rings to the category of graphs, which means that our graph takes into account not only the commutativity relation in the ring, but also the commutativity relation in all of its homomorphic images. 
Furthermore, we show that this compression is best possible for matrix algebras over finite fields, i.e., it compresses as much as possible while still inducing a functor. We compute the compressed commuting graphs of finite fields and rings of $2 \times 2$ matrices over finite fields.
\end{abstract}

\thanks{The support by the bilateral grant BI-BA/24-25-024 of the ARIS (Slovenian Research and Innovation Agency) is gratefully acknowledged. Damjana Kokol Bukovšek and Nik Stopar acknowledge financial support from the ARIS (Slovenian Research and Innovation Agency, research core funding No. P1-0222). Ivan-Vanja Boroja acknowledges financial support from the Municipality of Mrkonjić Grad.}
\keywords{Commuting graph, compressed commuting graph, matrix ring, finite field}
\subjclass[2020]{05C25, 15A27, 18A99}

\maketitle

\section{Introduction}

The essence of the commutativity relation on a given algebraic structure $A$ equipped with a product operation (e.g. ring, group, semigroup, etc.) is captured in its commuting graph $\Gamma(A)$. By definition, this is a simple graph whose vertices are all non-central elements of $A$ and where two distinct vertices $a, b$ are connected if they commute in $A$, i.e., if $ab = ba$. As far as we know, the commuting graph was first introduced for groups in \cite{BrFo} in an early attempt towards classification of simple finite groups. It was later extended to rings in \cite{AkGhHaMo04} and other algebraic structures \cite{ArKiKo, WaXi}.

Commuting graphs have seen a lot of attention in the last two decades. Various authors have studied their connectedness \cite{AkBiMo, AkGhHaMo04, DoD19}, diameter \cite{AkMoRaRa06, DoKoKu18, Sh16}, spectra and energies \cite{DuNa17, DuNa21}, and other properties. An important question in the theory is the isomorphism problem. It asks about the conditions under which an isomorphism of commuting graphs implies an isomorphism of underlying structures. In \cite{AkGhHaMo04} the authors conjectured that  the commuting graph is able to distinguish a ring of $n \times n$ matrices over a finite field among all finite unital rings. The conjecture was proved correct for $n=2$ and $n=3$ in \cite{Mo10,DoMa24} and for $n=2^k3^l$ with $k \ge 1$ in \cite{DoH19}, but it is still open for general $n$. In the category of groups, the commuting graph is able to characterize finite simple nonabelian groups, \cite{SoWo}. The commuting graph can distinguish among algebras of bounded operators on a complex Hilbert space, i.e. it can calculate the dimension of the underlying vector space, \cite{Ku18}.

Graphs play an important role in understanding algebraic structures also in other ways. Other types of graphs that are extensively studied are zero-divisor graph \cite{AnLi}, total graph \cite{AnBa}, power graph \cite{AbKeCh}, nilpotent graph \cite{AbZa10, DaNo15},
solvable graph \cite{BhNoNa20, BhNoNa22}, super graph \cite{ArCaRa24, ArCaNaSe22}, etc.

In \cite{Mu02} Mulay introduced a compressed zero-divisor graph of a ring. The idea of compression is to combine certain vertices of the graph into a single vertex, in order to make the graph smaller, and thus more manageable, while still keeping the essence of the property that the graph is capturing.
For this to work, we can only combine vertices that are indistinguishable in the non-compressed graph. 
While early examples of compressed zero-divisor graphs behave chaotically when ring homomorphisms are considered, a new, categorical approach was recently developed in \cite{DuJeSt21a,DuJeSt21b} to resolve this issue. In fact, for a unital ring $R$, the authors define a compressed zero-divisor graph $\Theta(R)$ using compression that induces a functor.
This new compressed zero-divisor graph takes into account not only the zero-divisor structure of the ring in question but also the zero-divisor structure of all possible homomorphic images of that ring.
The clear benefit of this approach is that the graph $\Theta(R)$ better captures the structural properties of rings.
In particular, we can recognize from the compressed zero-divisor graph of a finite unital commutative ring $K$ whether the ring $K$ is local, and whether it is a principal ideal ring.
Furthermore, the compressed zero-divisor graph of a direct product of finite unital rings is a tensor product of their compressed zero-divisor graphs.
See \cite{DuJeSt21a,DuJeSt21b} for more details.

The aim of this paper is to introduce
a compressed commuting graph
of a (unital) ring
$R$
and study its structure when $R$ is matrix ring over a finite field.
The motivation for this is as follows.
As it was the case for zero-divisor graph, we hope that this newly developed graph will shed new light on the commutativity relation in rings.
As mentioned above, the isomorphism problem for commuting graph of matrix rings is still open in general.
Since the compression gives additional information about the ring (in particular, we may encode how many elements get compressed into a single vertex into the graph, see Definition~\ref{def:weighted}), this new graph might help solve the isomorphism problem for matrix rings in the future.

Similarly as in the case of the compressed zero-divisor graph we want to use a compression that will induce a functor from the category of (unital) rings
to the category of undirected simple graphs with added loops, so that the graph will behave nicely with respect to ring homomorphisms.
For a (unital) ring $R$, the vertices of our (unital) compressed commuting graph $\Lambda(R)$ (resp. $\Lambda^1(R)$ in the unital case) are equivalence classes of elements of $R$, with respect to the equivalence relation defined by $a \sim b$ if and only if $a$ and $b$ individually generate the same (unital) subring of $R$.
Two vertices are connected by an edge if their respective representatives commute in $R$ (see Definitions~\ref{def:CCG} and~\ref{def:UCCG} for details).
Unlike in the classical commuting graph $\Gamma(R)$, we do not exclude the center of $R$ from the graph.
This means that we may also consider the compressed commuting graph of a commutative ring $K$. This graph is not trivial and it gives nontrivial information about the ring $K$.
In particular, for a general ring $R$, every vertex of $\Lambda(R)$ (resp. $\Lambda^1(R)$) corresponds to a (unital) subring of $R$ generated by one element, so the (unital) compressed commuting graph of $R$ gives information about the set of such subrings of $R$.
In fact, we can equivalently define (unital) compressed commuting graph of a ring $R$ to be the graph with vertex set equal to the set of all (unital) subrings of $R$ generated by one element, see Remark~\ref{rem:def}. Consequently, the size of the graph is equal to the number of (unital) subrings generated by one element.
The chosen relation $\sim$ turns out to be the best possible in the case of matrix algebras over finite fields in the sense that it is the coarsest equivalence relation that induces a functor (see Theorem~\ref{thm:best1} and Remark~\ref{rem:best} for details).

The paper is structured as follows. In Section~\ref{sec:def}, which constitutes a large part of our paper, we define a compressed commuting graph of a ring $R$ and a unital compressed commuting graph of a unital ring $R$.
We give a connection between both graphs and study their properties from a categorical point of view.
In particular, we show that, from a categorical perspective, the chosen compression is optimal in the case of matrix rings over finite fields.
In Section~\ref{sec:fields} we obtain some auxiliary results, which we need in Section~\ref{sec:matrices}. In particular, we describe the (unital) compressed commuting graphs of finite fields, the graph of a direct product of two finite fields, and the graph of a specific factor ring of the polynomial ring.
In Section~\ref{sec:matrices} we initiate the study of compressed commuting graphs of matrix rings. We describe the (unital) compressed commuting graph of the ring $R$ of $2 \times 2$ matrices over an arbitrary finite field $GF(p^n)$. As a corollary, we determine the asymptotic behaviour of the number of (unital) subrings of $R$ generated by one element as $n$ tends to infinity while the prime $p$ is fixed.

\section{Definition and properties}\label{sec:def}

Let $R$ be a general ring, possibly nonunital. A subring of $R$ generated by an element $a \in R$ will be denoted by $\rng{a}$, i.e. $\rng{a}=\{q(a) ~|~ q \in \Z[x],\, q(0)=0\}$, where $\Z[x]$ denotes the ring of polynomials with integer coefficients.
The \emph{centralizer} of an element $a \in R$ will be denoted by $\CC(a)$, i.e. $\CC(a)=\{x \in R ~|~ ax=xa\}$.
We introduce an equivalence relation $\sim$ on $R$ defined by $a \sim b$ if and only if $\rng{a}=\rng{b}$, and denote the equivalence class of an element $a \in R$ with respect to relation $\sim$ by $[a]$.
By definition $[a]$ consists of all single generators of the ring $\rng{a}$.

\begin{definition}\label{def:CCG}
A \emph{compressed commuting graph} of a ring $R$ is an undirected graph $\Lambda(R)$ whose vertex set is the set of all equivalence classes of elements of $R$ with respect to relation $\sim$ and there is an edge between $[a]$ and $[b]$ if and only if $ab=ba$.
\end{definition}

Note that edges in $\Lambda(R)$ are well defined. Indeed, if $[a]=[a']$, $[b]=[b']$, and $ab=ba$, then $a' \in \rng{a}$ and $b' \in \rng{b}$, hence, $a',b' \in \rng{a,b}$, the subring generated by $a$ and $b$.
But since $a$ and $b$ commute, $\rng{a,b}$ is a commutative ring, hence $a'$ and $b'$ commute as well.
It should be remarked that central elements of $R$ are not excluded from the graph $\Lambda(R)$ like in the usual commuting graph $\Gamma(R)$.
Furthermore, loops are allowed in $\Lambda(R)$, in fact, every vertex of $\Lambda(R)$ has a single loop on it.

\begin{remark}\label{rem:def}
As already mentioned, each vertex of $\Lambda(R)$ corresponds to a subring of $R$ generated by one element. This means that we could equivalently define the compressed commuting graph of $R$ as an undirected graph whose vertex set is the set
$$V(\Lambda(R))=\{\rng{a} \mid a \in R\}$$
of all subrings of $R$ generated by one element, and vertices $\rng{a}$ and $\rng{b}$ are connected by an edge if and only if $ab=ba$.
\end{remark}

In what follows we extend the mapping $\Lambda$ to a functor $\Lambda$. To do this we will need some basic notions from category theory, see for example \cite{MacL98}. In particular, recall that a functor $F$ between two categories $\bf A$ and $\bf B$ is a mapping that associates to each object $X$ in $\bf A$ an object $F(X)$ in $\bf B$, and to each morphism $f \colon X \to Y$ in $\bf A$ a morphism $F(f) \colon F(X) \to F(Y)$ in $\bf B$, such that $F(\text{id}_X)=\text{id}_{F(X)}$ and $F(f\circ g)=F(f) \circ F(g)$ for all objects $X$ in $\bf A$ and all morphisms $g \colon X \to Y$, $f \colon Y \to Z$ in $\bf A$.
Recall also that $\mathbf{Ring}$ is a category, whose objects are all (possibly nonunital) rings and morphisms are ring homomorphisms. Furthermore, $\mathbf{Graph}$ is a category, whose objects are undirected simple graphs that allow loops and morphisms are graph homomorphisms.

Note that the mapping $\Lambda$ associates to each ring $R$ a graph $\Lambda(R)$. Now we extend this mapping to a functor $\Lambda$ from the category $\mathbf{Ring}$ to the category $\mathbf{Graph}$. 
For a ring morphism $f \colon R \to S$ we define a graph morphism $\Lambda(f) \colon \Lambda(R) \to \Lambda(S)$ by $\Lambda(f)([r])=[f(r)]$.
We need to verify that the map $\Lambda(f)$ is well defined. If $[r]=[r']$ then there exist polynomials $p,q \in \Z[x]$ with $p(0)=q(0)=0$ such that $r'=p(r)$ and $r=q(r')$.
Hence, $f(r')=p(f(r))$ and $f(r)=q(f(r'))$ and consequently $[f(r)]=[f(r')]$.
Furthermore, $\Lambda(f)$ maps connected vertices to connected vertices since $ab=ba$ implies $f(a)f(b)=f(b)f(a)$.
So the map $\Lambda(f)$ is indeed a graph morphism.

Clearly, $\Lambda(\operatorname{id}_R)=\operatorname{id}_{\Lambda(R)}$ for any ring $R$ and $\Lambda(f\circ g)=\Lambda(f) \circ \Lambda(g)$ for all ring morphisms $f \colon S \to T$ and $g \colon R \to S$. This proves the following.

\begin{proposition}\label{prop:functor}
    The mapping $\Lambda \colon \mathbf{Ring} \to \mathbf{Graph}$ that maps a ring $R$ to the graph $\Lambda(R)$ and a ring morphism $f$ to the graph morphism $\Lambda(f)$ is a functor.
\end{proposition}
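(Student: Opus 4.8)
The plan is to verify the two functoriality axioms directly from the definition of the map $\Lambda(f)$ on morphisms, since the object-level assignment $R \mapsto \Lambda(R)$ and the fact that $\Lambda(f)$ is a well-defined graph morphism have already been established in the preceding discussion. Thus the only thing remaining is to check that $\Lambda$ respects identities and composition.

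\begin{proof}
It has already been shown that $\Lambda$ sends each ring $R$ to a graph $\Lambda(R)$ and each ring morphism $f \colon R \to S$ to a well-defined graph morphism $\Lambda(f) \colon \Lambda(R) \to \Lambda(S)$. It remains to verify the two functoriality axioms.

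First I would check preservation of identities. For any ring $R$ and any vertex $[r]$ of $\Lambda(R)$ we have
$$
\Lambda(\operatorname{id}_R)([r]) = [\operatorname{id}_R(r)] = [r] = \operatorname{id}_{\Lambda(R)}([r]),
$$
so $\Lambda(\operatorname{id}_R) = \operatorname{id}_{\Lambda(R)}$.

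Next I would check preservation of composition. Let $g \colon R \to S$ and $f \colon S \to T$ be ring morphisms. For any vertex $[r]$ of $\Lambda(R)$,
$$
\Lambda(f \circ g)([r]) = [(f \circ g)(r)] = [f(g(r))] = \Lambda(f)([g(r)]) = \Lambda(f)\bigl(\Lambda(g)([r])\bigr) = \bigl(\Lambda(f) \circ \Lambda(g)\bigr)([r]),
$$
so $\Lambda(f \circ g) = \Lambda(f) \circ \Lambda(g)$. Since both axioms hold, $\Lambda$ is a functor.
\end{proof}

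There is no real obstacle here: the content of the proposition lies entirely in the earlier verification that $\Lambda(f)$ is well defined (independence of the class representative under the equivalence $\sim$) and that it preserves edges, both of which are granted by the preceding paragraph. The functoriality itself reduces to the trivial observation that $[{-}]$ and application of $f$ commute at the level of representatives, so the two axioms follow by direct substitution.
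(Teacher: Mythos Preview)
Your proof is correct and follows exactly the paper's approach: the paper simply states that $\Lambda(\operatorname{id}_R)=\operatorname{id}_{\Lambda(R)}$ and $\Lambda(f\circ g)=\Lambda(f)\circ\Lambda(g)$ are clear, while you spell out these verifications explicitly. There is nothing to add.
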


Note that the fact that edges in the graph $\Lambda(R)$ are well defined implies that the equivalence relation $\sim$ has the following property
\begin{equation*}
    a \sim b \ \Rightarrow \ \CC(a)=\CC(b).
\end{equation*}

Next we define a unital version of the above functor for unital rings.
Let $R$ be a unital ring with identity element $1$.
Let $\rng{a}_1$ denote the subring of $R$ generated by $1$ and $a \in R$ (we will also call $\rng{a}_1$ a \emph{unital subring} of $R$ generated by $a$).
Note that $\rng{a}_1=\{q(a) ~|~ q \in \Z[x]\}$, where in the evaluation $q(a)$ the constant term of $q$ is multiplied by the identity element $1 \in R$.
Now define an equivalence relation on $R$ by $a \sim_1 b$ if and only if $\rng{a}_1=\rng{b}_1$ and denote the equivalence class of an element $a \in R$ by $[a]_1$.

\begin{definition}\label{def:UCCG}
    A \emph{unital compressed commuting graph} of a unital ring $R$ is an undirected graph $\Lambda^1(R)$ whose vertex set is the set of all equivalence classes of elements of $R$ with respect to relation $\sim_1$ and there is an edge between $[a]_1$ and $[b]_1$ of and only if $ab=ba$.
\end{definition}

Similarly as in the previous case, the graph $\Lambda^1(R)$ may be defined  as an undirected graph whose vertex set is the set
$$V(\Lambda^1(R))=\{\rng{a}_1 \mid a \in R\}$$
of all unital subrings of $R$ generated by one element, and vertices $\rng{a}_1$ and $\rng{b}_1$ are connected by an edge if and only if $ab=ba$.
The mapping $\Lambda^1$ can be extended to a functor $\Lambda^1$ from the category $\mathbf{Ring^1}$ of unital rings and unital ring homomorphisms to the category $\mathbf{Graph}$.
For this to work we need to consider the zero ring $R=0$ as a unital ring with $1=0$.
In particular, for any unital ring morphism $f \colon R \to S$, where $R$ and $S$ are unital rings, we define $\Lambda^1(f) \colon \Lambda^1(R) \to \Lambda^1(S)$ again by $\Lambda^1(f)([r]_1)=[f(r)]_1$.
The fact that $f$ is unital implies that $\Lambda^1(f)$ is well defined.

\begin{proposition}\label{prop:functor1}
    The mapping $\Lambda^1 \colon \mathbf{Ring^1} \to \mathbf{Graph}$ that maps a unital ring $R$ to the graph $\Lambda^1(R)$ and a unital ring morphism $f$ to the graph morphism $\Lambda^1(f)$ is a functor.
\end{proposition}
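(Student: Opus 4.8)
The plan is to mirror, step for step, the verification already carried out for the functor $\Lambda$ in the paragraphs preceding Proposition~\ref{prop:functor}, isolating the single place where unitality of the morphisms is actually used. Three things need checking: that each $\Lambda^1(f)$ is well defined on vertices, that it is a graph morphism, and that the two functoriality identities hold.

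First I would verify well-definedness of $\Lambda^1(f)$. Suppose $[r]_1 = [r']_1$, so that $\rng{r}_1 = \rng{r'}_1$. Then there exist $p, q \in \Z[x]$ with $r' = p(r)$ and $r = q(r')$, where in each evaluation the constant term multiplies the identity of $R$. Applying $f$ and using $f(1_R) = 1_S$ gives $f(r') = p(f(r))$ and $f(r) = q(f(r'))$, now with constant terms multiplying $1_S$; hence $f(r') \in \rng{f(r)}_1$ and $f(r) \in \rng{f(r')}_1$, so $[f(r)]_1 = [f(r')]_1$. This is precisely the step where unitality is indispensable, and I expect it to be the only point requiring genuine care: without $f(1_R) = 1_S$ the constant terms would not transport correctly, and the class of $f(r)$ could depend on the chosen representative.

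Next I would check that $\Lambda^1(f)$ is a graph morphism. If $[a]_1$ and $[b]_1$ are adjacent then $ab = ba$, whence $f(a)f(b) = f(ab) = f(ba) = f(b)f(a)$, so their images are adjacent and, in particular, every loop is sent to a loop. This uses only multiplicativity of $f$ and is identical to the non-unital case treated above.

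Finally I would observe that the two functoriality identities follow at once from the formula $\Lambda^1(f)([r]_1) = [f(r)]_1$: indeed $\Lambda^1(\operatorname{id}_R)([r]_1) = [r]_1$, and for composable unital morphisms $g \colon R \to S$ and $f \colon S \to T$ one has $\Lambda^1(f \circ g)([r]_1) = [f(g(r))]_1 = \Lambda^1(f)\bigl(\Lambda^1(g)([r]_1)\bigr)$. The only additional bookkeeping is the convention that the zero ring counts as a unital ring with $1 = 0$, which ensures that $\mathbf{Ring^1}$ supplies all the objects on which $\Lambda^1$ must be defined; no genuine obstacle arises beyond the careful handling of constant terms in the well-definedness step.
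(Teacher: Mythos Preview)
Your proposal is correct and follows essentially the same approach as the paper: the paper does not give an explicit proof of this proposition but simply remarks, just before stating it, that ``the fact that $f$ is unital implies that $\Lambda^1(f)$ is well defined,'' implicitly deferring to the verification already done for $\Lambda$. Your write-up spells out exactly this, including the identification of the constant-term step as the one place where unitality matters.
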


For a general ring $R$, let $m$ be the least positive integer, if it exists, such that $mR=0$. Otherwise let $m=0$. The number $m$ is called the \emph{characteristic} of ring $R$. We may adjoin an identity element to $R$ to form a unital ring $R^1$ as follows.
Note that $R$ is a left $\Z_m$-module, where $\Z_0=\Z$ if $m=0$.
Equip the set $R^1=\Z_m \times R$ with componentwise addition and define multiplication in $R^1$ by $(k,a)(n,b)=(kn,na+kb+ab)$.
Then $R^1$ is a unital ring with identity element $(1,0)$.
If $m=p$ is prime then $R^1$ is also an algebra over $\Z_p \cong GF(p)$ with scalar multiplication defined componentwise.
The ring $R$ is an ideal of $R^1$ with the canonical embedding $i \colon R \to R^1$ given by $i(r)=(0,r)$.
We have the following connection between functors $\Lambda$ and $\Lambda^1$.

\begin{proposition}\label{prop:iso}
    For any ring $R$ (unital or nonunital) we have $\Lambda(R) \cong \Lambda^1(R^1)$, where the isomorphism $\hat{\imath} \colon \Lambda(R) \to \Lambda^1(R^1)$ is given by $\hat{\imath}([a])=[i(a)]_1$.
\end{proposition}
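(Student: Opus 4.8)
The plan is to verify directly that $\hat{\imath}$ is an isomorphism of graphs, which amounts to three things: that $\hat{\imath}$ is a well-defined bijection on vertex sets, and that it both preserves and reflects edges. Note that we cannot simply invoke functoriality here, since the embedding $i \colon R \to R^1$ is a morphism in $\mathbf{Ring}$ but is not unital, so $\Lambda^1$ does not apply to it; the comparison is between the two different functors $\Lambda$ and $\Lambda^1$, and the isomorphism must be built by hand.

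First I would establish the computational engine. For $a \in R$ a short induction on powers of $(0,a)$ in $R^1$ gives $(0,a)^n = (0, a^n)$ for $n \ge 1$, while $(k,0) = k\,(1,0)$ for the identity $(1,0)$. Evaluating a polynomial $q \in \Z[x]$ therefore yields $q\big((0,a)\big) = \big(q(0),\, (q - q(0))(a)\big)$, where the first coordinate is read in $\Z_m$ and $(q-q(0))(a) \in \rng{a}$ because $q - q(0)$ has zero constant term. Letting $q$ range over $\Z[x]$, the first coordinate sweeps out all of $\Z_m$ and the second sweeps out all of $\rng{a}$, independently, so I obtain the key identity $\rng{(0,a)}_1 = \Z_m \times \rng{a}$, viewed as subsets of $R^1 = \Z_m \times R$. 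From this, well-definedness and injectivity of $\hat{\imath}$ follow at once: $\rng{(0,a)}_1 = \rng{(0,b)}_1$ is equivalent to $\Z_m \times \rng{a} = \Z_m \times \rng{b}$, hence to $\rng{a} = \rng{b}$, i.e. $[a] = [b] \iff [i(a)]_1 = [i(b)]_1$. For surjectivity I would use that every integer multiple of the identity lies in any unital subring: writing an arbitrary element as $(k,b) = k\,(1,0) + (0,b)$ shows $(0,b) \in \rng{(k,b)}_1$ and $(k,b) \in \rng{(0,b)}_1$, so $\rng{(k,b)}_1 = \rng{(0,b)}_1$ and every vertex $[(k,b)]_1$ equals $\hat{\imath}([b])$. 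Finally, the edge condition is immediate from $(0,a)(0,b) = (0, ab)$ and $(0,b)(0,a) = (0, ba)$, so $i(a)$ and $i(b)$ commute in $R^1$ exactly when $a$ and $b$ commute in $R$; since this also covers the loops (every vertex commutes with itself), $\hat{\imath}$ preserves and reflects edges and is thus a graph isomorphism.

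The only genuinely delicate point is the characterization $\rng{(0,a)}_1 = \Z_m \times \rng{a}$, where one must keep track of the $\Z_m$-module structure built into the multiplication of $R^1$: the claim that the first coordinate fills out all of $\Z_m$ regardless of $a$ rests on the integer multiples of $(1,0)$ already lying in the unital subring, and the reduction modulo $m$ has to be handled consistently, including the degenerate case $m = 0$ where $\Z_m = \Z$. Everything after that is routine polynomial bookkeeping, with the small but essential idea being the surjectivity step, where the scalar first coordinate $(k,0)$ is absorbed into the identity to bring an arbitrary generator into the normal form $(0,b)$.
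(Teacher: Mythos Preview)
Your proof is correct and follows essentially the same route as the paper's: both establish the key identity $\rng{i(a)}_1 = \Z_m \times \rng{a}$, deduce from it that $\hat{\imath}$ is a well-defined bijection (with surjectivity handled by absorbing the scalar coordinate $k(1,0)$ into the unital subring), and verify the edge condition via $i(a)i(b)=(0,ab)$. The only cosmetic difference is that you compute $q((0,a))$ explicitly to obtain the key identity, whereas the paper argues more abstractly that $\Z\cdot 1 + i(\rng{a})$ is already a unital subring containing the generators; the content is the same.
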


\begin{proof}
    Let $i \colon R \to R^1=(\Z_m \times R,+,\cdot)$ be the canonical embedding.
    Note that for any $(k,a) \in R^1$ the ring $\rng{(k,a)}_1$ is generated by $(k,a)$ and $(1,0)$, thus also by $(0,a)=i(a)$ and $1=(1,0) \in R^1$.
    Hence, it contains $\Z\cdot 1 +\rng{i(a)}=\Z_m \times \rng{i(a)} \subseteq R^1$. But the latter is clearly a unital subring of $R^1$, therefore $\rng{(k,a)}_1=\Z_m \times \rng{i(a)}=\Z\cdot 1 +i(\rng{a})=\rng{i(a)}_1$.
    This implies that the map $\hat{\imath} \colon \Lambda(R) \to \Lambda^1(R^1)$ given by $\hat{\imath}([a])=[i(a)]_1$ is a well defined bijection on the sets of vertices.
    Furthermore, for any $a,b \in R$ we have $ab=ba$ if and only if $i(a)i(b)=i(b)i(a)$. Hence, $\hat{\imath}$ is a graph isomorphism.
\end{proof}

The following gives the motivation for choosing the particular equivalence relation in the above definitions.
It implies that, at least on finite unital algebras, $\sim_1$ is the coarsest relation that still induces a functor, so it is the best possible for our purpose.

\begin{theorem}\label{thm:best1}
    For each unital ring $R$ let $\approx_R$ be an equivalence relation on $R$ such that the family $\{\approx_R |~ R \text{ a unital ring}\,\}$ induces a well defined functor $F \colon \mathbf{Ring^1} \to \mathbf{Graph}$ in the following way:
    \begin{enumerate}[$(i)$]
        \item For each unital ring $R$ the vertices of $F(R)$ are equivalence classes $[r]_{\approx_R}$ of elements of $R$ with respect to $\approx_R$ and there is an edge between $[a]_{\approx_R}$ and $[b]_{\approx_R}$ if and only if $ab=ba$.
        \item For each unital ring morphism $f \colon R \to S$, where $R$ and $S$ are unital rings, the graph morphism $F(f) \colon F(R) \to F(S)$ is given by
        $$F(f)([r]_{\approx_R})=[f(r)]_{\approx_S}$$ for all $r \in R$.
    \end{enumerate}
    Then for any finite unital algebra $A$ and for any $a,b \in A$ the condition $a \approx_A b$ implies $a \sim_1 b$. 
\end{theorem}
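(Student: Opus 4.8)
The plan is to leverage the two constraints that the stated hypotheses impose on the family $\{\approx_R\}$ and then to transport the problem into an endomorphism algebra, where it is settled by a double commutant argument for a single operator.

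First I would isolate the two properties I will use. Exactly as for $\sim$ in the paragraph following Proposition~\ref{prop:functor}, the requirement that the edges of $F(R)$ be well defined forces, in every unital ring $R$, the implication $x \approx_R y \Rightarrow \CC(x) = \CC(y)$; call this (P1). Functoriality gives the second property: for every unital ring morphism $f \colon R \to S$ the assignment $F(f)([x]_{\approx_R}) = [f(x)]_{\approx_S}$ must be well defined, so $x \approx_R y \Rightarrow f(x) \approx_S f(y)$; call this (P2).

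Now suppose $a \approx_A b$ in a finite unital algebra $A$. I would regard $A$ as a finite-dimensional algebra over its prime field $GF(p)$, so that $\rng{a}_1 = GF(p)[a]$ and $\rng{b}_1 = GF(p)[b]$. The key device is the left regular representation $\lambda \colon A \to \operatorname{End}_{GF(p)}(A)$, $\lambda_r(x) = rx$, which is an injective unital ring homomorphism. Applying (P2) to $\lambda$ yields $\lambda_a \approx \lambda_b$ in $\operatorname{End}_{GF(p)}(A)$, and then (P1) in that ring gives $\CC(\lambda_a) = \CC(\lambda_b)$: the operators $\lambda_a$ and $\lambda_b$ have the same commutant. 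Taking commutants once more, their bicommutants also coincide. At this point I would invoke the classical fact that for a single linear operator $T$ on a finite-dimensional vector space the bicommutant equals the polynomial algebra $GF(p)[T]$. Applied to $T = \lambda_a$ and $T = \lambda_b$, this gives $GF(p)[\lambda_a] = GF(p)[\lambda_b]$; since $\lambda$ is a ring homomorphism these subalgebras are $\lambda(\rng{a}_1)$ and $\lambda(\rng{b}_1)$, so injectivity of $\lambda$ forces $\rng{a}_1 = \rng{b}_1$, that is, $a \sim_1 b$.

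The main obstacle is conceptual rather than computational. Applying (P1) directly inside $A$ only produces $\CC(a) = \CC(b)$, which is strictly weaker than $\rng{a}_1 = \rng{b}_1$, so the heart of the argument is the realization that (P2) lets one import this centralizer identity into the faithful representation $\operatorname{End}_{GF(p)}(A)$, where equality of centralizers upgrades, through the single-operator double commutant identity $\{T\}'' = GF(p)[T]$, to equality of the generated polynomial algebras. Establishing (or citing) that double commutant identity is the one genuinely nonformal input; the passage to the prime field and the identification $\rng{a}_1 = GF(p)[a]$ should be remarked upon but are routine.
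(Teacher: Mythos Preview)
Your proof is correct and is essentially the same as the paper's: both transport the relation $\approx$ along the left regular representation into $\operatorname{End}_{GF(p)}(A)$, use the well-definedness of edges to obtain equal centralizers there, and then invoke the single-operator double commutant identity (the paper's ``Centralizer Theorem'') to conclude $GF(p)[\lambda_a]=GF(p)[\lambda_b]$ and pull this back to $A$. The only cosmetic difference is that the paper finishes by evaluating $L_a=L_{P(b)}$ at $1\in A$, whereas you appeal directly to injectivity of $\lambda$ on subsets; these are equivalent.
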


\begin{proof}
    Let $A$ be a finite unital algebra. Since it is finite, it is an algebra over a finite field $\F$,  
    the characteristic of $\F$ is a prime $p$, and its prime field is $GF(p)$.
    Thus, we may consider $A$ as a finite dimensional algebra over $GF(p)$.
    Hence, the algebra $E=\operatorname{End}_{GF(p)}(A)$ of all $GF(p)$-linear transformations on $A$ is isomorphic to a matrix algebra $\mm_n(GF(p))$, where $n=\dim_{GF(p)} A$.
    Let $L \colon A \to E$ be the left regular representation of $A$ given by $L(a)=L_a$ for all $a \in A$, where $L_a$ denotes left multiplication by $a$.
    Now suppose $a \approx_A b$ holds in $A$.
    Since $L$ is a unital ring morphism, item $(ii)$ implies that $L_a \approx_E L_b$.
    The fact that edges in item $(ii)$ must be well defined implies $\CC(L_a)=\CC(L_b)$.
    Since $E$ is isomorphic to a full matrix algebra, it follows from the Centralizer Theorem \cite[p. 113, Corollary 1]{Ja53} that this is equivalent to  $GF(p)[L_a]=GF(p)[L_b]$, where $GF(p)[L_a]$ denotes the unital $GF(p)$-algebra generated by $L_a$, see \cite[Lemma 2.4]{DoGuKuOb13}.
    But $GF(p)\cong \Z_p$ is a factor ring of $\Z$, so that $GF(p)[L_a]=\rng{L_a}_1$ and consequently $\rng{L_a}_1=\rng{L_b}_1$.
    Hence, there exist polynomials $P,Q \in \Z[x]$ such that $L_a=P(L_b)=L_{P(b)}$ and $L_b=Q(L_a)=L_{Q(a)}$. Applying these transformations to $1 \in A$ gives $a=P(b)$ and $b=Q(a)$, hence $\rng{a}_1=\rng{b}_1$ and $a \sim_1 b$.
    \end{proof}

\begin{remark}\label{rem:best}
    Note that the nonunital version of Theorem~\ref{thm:best1} also holds. If $A$ is any finite algebra over a field $\F$ with prime field $GF(p)$, then $A^1$ is a finite unital $GF(p)$-algebra and we have an inclusion $i \colon A \to A^1$.
    If $a \approx_A b$ then $i(a) \approx_{A^1} i(b)$ and the same proof as above applied to $A^1$ show that 
    $i(a) \sim_1 i(b)$, i.e. $[i(a)]_1=[i(b)]_1$. By Proposition~\ref{prop:iso} we conclude that $[a]=[b]$ and $a \sim b$. 
\end{remark}

Next we show that functors $\Lambda$ and $\Lambda^1$ preserve embeddings, i.e., injective morphisms.

\begin{lemma}\label{lem:embedding}
    If $f \colon R \to S$ is a ring embedding, then $\Lambda(f) \colon \Lambda(R) \to \Lambda(S)$ is a graph embedding. If, in addition, $R$, $S$, and $f$ are unital, then $\Lambda^1(f) \colon \Lambda^1(R) \to \Lambda^1(S)$ is a graph embedding as well.
\end{lemma}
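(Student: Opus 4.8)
The plan is to verify the two defining properties of a graph embedding: that $\Lambda(f)$ is injective on the vertex set and that it both preserves and reflects adjacency, i.e. that it is an isomorphism onto the induced subgraph on its image. Preservation of adjacency comes for free: by Proposition~\ref{prop:functor} the map $\Lambda(f)$ is already a graph morphism, so $ab=ba$ forces $f(a)f(b)=f(b)f(a)$. Thus the only things left to establish are injectivity on vertices and the reverse implication for edges.

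For injectivity I would start from the assumption $\Lambda(f)([a])=\Lambda(f)([b])$, which means $[f(a)]=[f(b)]$, i.e. $\rng{f(a)}=\rng{f(b)}$ in $S$. This yields polynomials $p,q \in \Z[x]$ with $p(0)=q(0)=0$ such that $f(a)=p(f(b))$ and $f(b)=q(f(a))$. The key step is to interchange $f$ with polynomial evaluation: since $f$ is a ring homomorphism and $p,q$ have zero constant term, $p(f(b))=f(p(b))$ and $q(f(a))=f(q(a))$, so $f(a)=f(p(b))$ and $f(b)=f(q(a))$. Now injectivity of $f$ transports these relations back into $R$, giving $a=p(b)\in\rng{b}$ and $b=q(a)\in\rng{a}$, hence $\rng{a}=\rng{b}$ and $[a]=[b]$.

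For edge reflection I would suppose that the images $[f(a)]$ and $[f(b)]$ are adjacent in $\Lambda(S)$, that is $f(a)f(b)=f(b)f(a)$. Then $f(ab)=f(a)f(b)=f(b)f(a)=f(ba)$, and injectivity of $f$ forces $ab=ba$, so $[a]$ and $[b]$ are adjacent in $\Lambda(R)$. Combined with preservation of adjacency, this shows that $\Lambda(f)$ reflects edges, completing the verification that it is a graph embedding.

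The unital case would be handled by the identical argument, with $\rng{\cdot}$ replaced by $\rng{\cdot}_1$ and with $p,q$ now allowed to have nonzero constant term; here one uses that $f$ is unital, so $f(1_R)=1_S$ and the identity $f(p(b))=p(f(b))$ continues to hold for every $p\in\Z[x]$. I do not expect a genuine obstacle in this proof: the single point that requires care, and the real content of the argument, is the interchange of $f$ with polynomial evaluation followed by the use of injectivity to pull the generating relations from $S$ back to $R$. Once that is in place, the edge reflection is an immediate one-line consequence of injectivity of $f$.
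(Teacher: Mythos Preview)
Your argument is correct and the injectivity step is exactly what the paper does: pick polynomials witnessing $\rng{f(a)}=\rng{f(b)}$, commute $f$ past polynomial evaluation, and cancel by injectivity. The only difference is one of interpretation: the paper takes ``graph embedding'' to mean simply ``injective morphism'' (this is stated explicitly just before the lemma), so it declares that injectivity is all that needs checking and omits your edge-reflection paragraph. Your extra verification that $f(a)f(b)=f(b)f(a)\Rightarrow ab=ba$ is correct and harmless, and indeed shows the stronger conclusion that $\Lambda(f)$ is an isomorphism onto the induced subgraph on its image; but for the paper's purposes it is not required. Apart from that and the trivial swap of which case (unital vs.\ nonunital) is spelled out first, the proofs coincide.
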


\begin{proof}
    Let $f \colon R \to S$ be a unital ring embedding. We only need to prove that $\Lambda^1(f)$ is injective.
    If $[f(a)]_1=[f(b)]_1$ then there exist polynomials $p,q \in \Z[x]$ such that $f(a)=p(f(b))$ and $f(b)=q(f(a))$.
    Since $f$ is a unital ring morphism we infer $f(a-p(b))=f(b-q(a))=0$ and by the injectivity of $f$ we conclude that $a=p(b)$ and $b=q(a)$.
    This means that $[a]_1=[b]_1$, so that $\Lambda^1(f)$ is injective.
    The proof for $\Lambda$ is similar.
\end{proof}

If $G$ and $H$ are two graphs we denote by $G \cup H$ their disjoint union, by $tG$ a union of $t$ copies of $G$ and by $G \vee H$ their join, i.e. the graph with  $V(G \vee H) = V(G) \cup V(H)$ and $E(G \vee H) = E(G) \cup E(H) \cup \{\{a,b\} ~|~ a \in V(G), b \in V(H)\}$. 
The tensor product $G\times H$ of graphs $G$ and $H$ is defined as follows.
The set of vertices of $G\times H$ is the Cartesian product $V\left(G\right)\times V\left(H\right)$
and there is an edge between $\left(g,h\right)$ and $\left(g',h'\right)$
if and only if there is an edge between $g$ and $g'$ in $G$, and an edge between $h$ and $h'$
in $H$.
We denote by $K_n$ the complete graph on $n$ vertices without any loops and by $K^\circ_n$ the complete graph on $n$ vertices with all the loops.

We introduce the notation for the number of vertices of (unital) compressed commuting graph.
\begin{definition}
Let $R$ be a ring. We denote the number of vertices of $\Lambda^1(R)$ and $\Lambda(R)$ by $v_1(R)$ and $v(R)$, respectively.
\end{definition}

It is clear that if $R$ is a commutative ring then $\Lambda^1(R)\cong K_{v_1(R)}^\circ$ and $\Lambda(R)\cong K_{v(R)}^\circ$. 

Let $R$ be a (unital) ring and $\Gamma(R)$ its (usual) commuting graph, i.e. $V(\Gamma(R))$ is the set of all non-central elements of $R$, and $a,b \in V(\Gamma(R))$ are connected if and only if $a\ne b$ and $ab=ba$. If $a,b \in V(\Gamma(R))$, $[a]_1 = [b]_1$, and $a \ne b$, then $a$ and $b$ are connected in $\Gamma(R)$. 
Furthermore, if $a,b,c,d \in V(\Gamma(R))$ and $[a]_1 = [b]_1 \ne [c]_1 = [d]_1$, then $a$ and $c$ are connected in $\Gamma(R)$ if and only if $b$ and $d$ are connected in $\Gamma(R)$. So the (unital) compressed commuting graph of $R$ is obtained from $\Gamma(R)$ by compressing some cliques in $\Gamma(R)$ into single points, adding all the loops and taking the join with the (unital) compressed commuting graph of the center of $R$. 

We will consider also weighted (unital) compressed commuting graphs of finite (unital) rings. A \emph{weighted graph} is a graph $G$ together with a function $w: V(G) \to \RR$ of weights of vertices. We denote by $K^\circ_n(w_1, w_2, \ldots, w_n)$ the weighted complete graph with all the loops, where the weights of the vertices are $w_1, w_2, \ldots, w_n$. If we permute the weights $w_1, w_2, \ldots, w_n$, we obtain an isomorphic weighted graph, so we will usually assume that $w_1 \ge w_2 \ge \ldots \ge w_n$.

\begin{definition}\label{def:weighted}
If $R$ is a finite ring, its \emph{weighted compressed commuting graph} $\Lambda_w(R)$ is the graph $\Lambda(R)$ with weight function $w([a]) = |[a]|$. If $R$ is a finite unital ring, its \emph{weighted unital compressed commuting graph} $\Lambda^1_w(R)$ is the graph $\Lambda^1(R)$ with weight function $w([a]_1) = |[a]_1|$.    
\end{definition}

The commuting graph $\Gamma(R)$ can be reconstructed from (unital) weighted compressed commuting graph $\Lambda_w(R)$ or $\Lambda^1_w(R)$. Namely, $\Gamma(R)$ can be obtained from $\Lambda_w(R)$ or $\Lambda^1_w(R)$ by first removing all vertices, which are connected to every vertex (and correspond to central elements), then blowing-up every vertex $v$ of the (unital) weighted compressed commuting graph into $w(v)$ connected copies and finally removing all loops.

While it is not clear which of the two graphs $\Gamma(R)$ or $\Lambda(R)$ contains more information about the ring $R$, it is certainly the case that the graph $\Lambda_\omega(R)$ contains the most information among these three graphs.

\section{Finite fields and related rings}\label{sec:fields}

Let $p$ be a prime and $n \ge 1$ an integer. We denote by $GF(p^n)$ the finite field with $p^n$ elements. In this section we describe the (unital) compressed commuting graph of the field $GF(p^n)$, the graph of a direct product of two arbitrary finite fields, and the graph of a specific factor ring of the polynomial ring.

From now on we are going to regard the vertex set of the (unital) compressed commuting graph of $R$ to be the set of all (unital) subrings of $R$ generated by one element, see Remark~\ref{rem:def}.

We denote by $d(n)$ the number of all positive divisors of $n$ and by $\sigma(n) = \sum_{d | n}d$ the Euler sigma function, i.e., the sum of all positive divisors of $n$.

\begin{theorem}\label{th:field}
Let $p$ be a prime and $n \ge 1$ an integer. Then the (unital) compressed commuting graph of the field $GF(p^n)$ is a complete graph on $d(n)$ respectively $d(n)+1$
vertices with all the loops, i.e.
$$ \Lambda^1(GF(p^n)) \cong K_{d(n)}^\circ \quad\text{ and }\quad \Lambda(GF(p^n)) \cong K_{d(n)+1}^\circ. $$
\end{theorem}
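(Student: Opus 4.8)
The plan is to reduce the statement to a counting problem and then carry out the count. Since $GF(p^n)$ is a field, hence commutative, every two elements commute and every vertex carries its loop, so (as already noted for commutative rings) both $\Lambda(GF(p^n))$ and $\Lambda^1(GF(p^n))$ are complete graphs with all loops. Thus the only thing left to determine is the number of vertices, i.e. the number of (unital) subrings of $GF(p^n)$ generated by a single element; the identifications with $K^\circ_{d(n)}$ and $K^\circ_{d(n)+1}$ follow once these two counts are established.

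For the unital graph I would first use that $GF(p^n)$ has characteristic $p$, so that $\rng{a}_1 = GF(p)[a]$ for every $a$. As a finite commutative integral domain this is a field, hence a subfield of $GF(p^n)$. Conversely, any subfield $F$ is of the form $\rng{\alpha}_1$: taking $\alpha$ to be a generator of the cyclic group $F^\ast$, the ring $GF(p)[\alpha]$ already contains $0$ together with all powers of $\alpha$, i.e. all of $F$. Hence the vertices of $\Lambda^1$ are exactly the subfields of $GF(p^n)$, and by the standard description of subfields of a finite field these are the fields $GF(p^d)$ with $d \mid n$, one for each divisor. This gives $d(n)$ vertices and $\Lambda^1(GF(p^n)) \cong K^\circ_{d(n)}$.

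For the nonunital graph the vertices are the subrings $\rng{a}$, which need not contain $1$. The generator $a=0$ yields the zero ring $\{0\}$, which is a genuinely new vertex. The crux of the argument, and the step I expect to be the main obstacle, is to show that every \emph{nonzero} generator still produces a subfield, i.e. that $1 \in \rng{a}$ whenever $a \neq 0$. I would argue this as follows: since $\rng{a}_1 = GF(p)[a]$ is a field and $a \neq 0$, the inverse satisfies $a^{-1} = g(a)$ for some $g \in GF(p)[x]$, and multiplying by $a$ gives $1 = a\,g(a)$, a $GF(p)$-combination of strictly positive powers of $a$; since $GF(p) = \Z/p\Z$ this lies in $\rng{a}$, so $1 \in \rng{a}$ and $\rng{a} = \rng{a}_1$. (Equivalently, one checks that the minimal polynomial of a nonzero $a$ over $GF(p)$ has nonzero constant term and solves the relation $m(a)=0$ for $1$.) Consequently the nonzero generators contribute exactly the same $d(n)$ subfields as in the unital case, and together with the zero ring this gives $d(n)+1$ vertices, whence $\Lambda(GF(p^n)) \cong K^\circ_{d(n)+1}$.
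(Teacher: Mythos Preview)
Your proof is correct and follows essentially the same approach as the paper: both reduce to counting subfields of $GF(p^n)$ after observing that commutativity forces the graph to be complete with all loops, and both distinguish the nonunital case by the extra vertex $\{0\}$. Your argument is in fact more detailed than the paper's on two points---why $\rng{a}=\rng{a}_1$ for nonzero $a$, and why every subfield is singly generated---which the paper simply asserts.
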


\begin{proof}
Since a field is commutative, its (unital) compressed commuting graph is a complete graph with all the loops.
If $x\ne 0$, then $\langle x \rangle_1 = \langle x \rangle$ equals the subfield generated by $x$.
Furthermore, $\langle 0 \rangle_1 = GF(p)$ and $\langle 0 \rangle = \{0\}$.
The field $GF(p^n)$ contains a unique subfield isomorphic to $GF(p^m)$ if and only if $m$ is a divisor of $n$, see \cite[Theorem 2.6]{LiNi94}.  So the number of subfields in $GF(p^n)$ equals the number of divisors of $n$, i.e. $d(n)$, and each subfield is generated by a single element. The result follows. 
\end{proof}

\begin{example}
Let $GF(p^6)$ be the finite field with $p^6$ elements. Then
$$\Lambda^1(GF(p^6)) \cong K_4^\circ \quad\text{and}\quad \Lambda(GF(p^6)) \cong K_5^\circ.$$
In the case of unital compressed commuting graph the equivalence classes contain $p, p^2-p, p^3-p,$ and $p^6-p^3-p^2+p$ elements, respectively. In particular, the weighted (unital) compressed commuting graph of $GF(2^6)$ is $K_4^\circ(54, 6, 2, 2)$ or $K_5^\circ(54, 6, 2, 1, 1)$.
\end{example}

\begin{corollary}
For any integer $m \ge 1$  there exist a ring $R$ such that its (unital) compressed commuting graph is a complete graph on $m$ vertices $K_m^\circ$ with all the loops. 
\end{corollary}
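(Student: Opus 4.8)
The plan is to realize every complete graph with loops directly as a (unital) compressed commuting graph of a suitable finite field, invoking Theorem~\ref{th:field}. That theorem yields $\Lambda^1(GF(p^n)) \cong K_{d(n)}^\circ$ and $\Lambda(GF(p^n)) \cong K_{d(n)+1}^\circ$, so the problem reduces entirely to controlling the values attained by the divisor-counting function $d$. The prime $p$ plays no role here, so I would fix it arbitrarily at the outset.

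The elementary observation I would record first is that $d$ is surjective onto the positive integers. Indeed, for any $k \ge 1$ the divisors of $2^{k-1}$ are precisely $1, 2, 4, \ldots, 2^{k-1}$, so $d(2^{k-1}) = k$ (any prime base in place of $2$ works just as well). With this in hand the unital case is immediate: given $m \ge 1$, set $n = 2^{m-1}$, so that $d(n) = m$, and take $R = GF(p^n)$; then $\Lambda^1(R) \cong K_{d(n)}^\circ = K_m^\circ$. For the non-unital case with $m \ge 2$ the same idea applies after a shift of the index, choosing $n = 2^{m-2}$ so that $d(n) = m-1$ and hence $\Lambda(GF(p^n)) \cong K_{d(n)+1}^\circ = K_m^\circ$.

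The only value that fields cannot reach is $m = 1$ in the non-unital version, since $d(n) + 1 \ge 2$ for every $n$; this is the single point deserving separate attention. Here I would take the zero ring $R = \{0\}$: its only subring generated by one element is $\rng{0} = \{0\}$, so $\Lambda(R)$ consists of a single vertex carrying its loop, i.e. $\Lambda(R) \cong K_1^\circ$. (Equivalently, via Proposition~\ref{prop:iso} one checks $\Lambda(\{0\}) \cong \Lambda^1(GF(p)) \cong K_1^\circ$.)

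There is no substantive obstacle in this argument: essentially everything is delivered by Theorem~\ref{th:field}, and the entire proof amounts to the surjectivity of $d$ together with the isolated $m = 1$ edge case in the non-unital setting.
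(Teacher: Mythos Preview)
Your proposal is correct and follows essentially the same approach as the paper: both invoke Theorem~\ref{th:field} with $n=2^{m-1}$ to realize $K_m^\circ$ as $\Lambda^1(GF(p^n))$ (and shift the exponent for the non-unital version), and both handle the residual case $\Lambda(R)\cong K_1^\circ$ with the zero ring. The paper's write-up is terser, but there is no substantive difference.
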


\begin{proof}
Let $n= 2^{m-1}$  and $R=GF(p^n)$. Then $d(n) = m$, so $\Lambda^1(R) \cong K_m^\circ$ and $\Lambda(R) \cong K_{m+1}^\circ$ by Theorem \ref{th:field}. Furthermore, $\Lambda^1(\{0\}) \cong \Lambda(\{0\}) \cong K_1^\circ$. 
\end{proof}

The functors $\Lambda^1$ and $\Lambda$ do not preserve products in general. Nevertheless, when the cardinalities of finite rings are relatively prime, the graph of the direct product of these rings is isomorphic to the tensor product of the corresponding graphs.  

\begin{proposition}\label{prop:direct}
Let $R$ and $S$ be two finite rings, such that $\gcd(|R|,|S|)=1$. If $(a,b)\in R\times S$ then $\rng{(a,b)}=\rng{a}\times\rng{b}$ and $\rng{(a,b)}_1=\rng{a}_1\times\rng{b}_1$. In
particular, $v(R\times S)=v(R)v(S)$ and $v_1(R\times
S)=v_1(R)v_1(S)$. Furthermore, $\Lambda(R \times S) \cong \Lambda(R) \times \Lambda(S)$ and $\Lambda^1(R \times S) \cong \Lambda^1(R) \times \Lambda^1(S)$.
\end{proposition}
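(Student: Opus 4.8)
The plan is to prove the ring-theoretic identities first, since everything else follows from them. For a pair $(a,b) \in R \times S$, the inclusion $\rng{(a,b)} \subseteq \rng{a} \times \rng{b}$ is immediate, because any polynomial in $(a,b)$ with zero constant term has components that are polynomials in $a$ and $b$ respectively (with zero constant term). The reverse inclusion is the substantive part and is where the coprimality hypothesis $\gcd(|R|,|S|)=1$ enters. The idea is that it suffices to show $(a,0)$ and $(0,b)$ both lie in $\rng{(a,b)}$, since then $\rng{a} \times \rng{b} = \rng{(a,0)} + \rng{(0,b)} \subseteq \rng{(a,b)}$. To produce $(a,0)$, I would use the fact that $R$ and $S$ are finite, so $a$ has finite additive order $r$ dividing $|R|$ and $b$ has finite additive order $s$ dividing $|S|$, with $\gcd(r,s)=1$. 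More usefully, one can find an integer $N$ with $N \equiv 1 \pmod{r'}$ and $N \equiv 0 \pmod{s'}$ for suitable exponents, but the cleaner route is via idempotent-type polynomials: in the finite ring $\rng{(a,b)}$ one seeks a polynomial $q \in \Z[x]$, $q(0)=0$, with $q(a) = a$ and $q(b) = 0$. Such a polynomial exists precisely because the multiplicative/additive structures of $\rng{a}$ and $\rng{b}$ have coprime sizes, so by the Chinese Remainder Theorem applied in $\Z[x]$ modulo the relevant relations one can interpolate.

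More concretely, I would argue as follows. Let $m = |\rng{a}|$ and $k = |\rng{b}|$; these divide $|R|$ and $|S|$ respectively, so $\gcd(m,k)=1$. In the finite ring $\rng{a}$ the map $x \mapsto x^{m!}$ or an appropriate power stabilizes, but the key point is simply to find $q(x) = c\, x$ type scalars won't suffice, so instead choose integers $u,v$ with $u m + v k = 1$ and set the polynomial to realize $(a,0)$. The truly clean formulation: since $\gcd(m,k)=1$, pick an integer $N$ with $N \equiv 1 \pmod{m}$ and $N \equiv 0 \pmod{k}$ in the additive sense, then $N \cdot (a,b) = (Na, Nb) = (a, 0)$ because $Na = a$ in $\rng{a}$ (as $ma=0$ forces $Na = a$ when $N\equiv 1$) and $Nb = 0$ in $\rng{b}$ (as $kb = 0$ forces $Nb=0$ when $N \equiv 0$). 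Here I am using that the additive order of every element of $\rng{a}$ divides $m$ and similarly for $\rng{b}$. This shows $(a,0) = N\cdot(a,b) \in \rng{(a,b)}$, and symmetrically $(0,b) \in \rng{(a,b)}$, completing the reverse inclusion. The unital statement $\rng{(a,b)}_1 = \rng{a}_1 \times \rng{b}_1$ follows by the same argument applied to the polynomials with constant terms allowed, noting that the identity of $R \times S$ is $(1_R, 1_S)$ and the coprimality again lets us separate the two factors.

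Once the subring identities hold, the vertex counts are immediate: the map $(\rng{a}, \rng{b}) \mapsto \rng{(a,b)}$ is a bijection between $V(\Lambda(R)) \times V(\Lambda(S))$ and $V(\Lambda(R \times S))$, giving $v(R \times S) = v(R)v(S)$, and likewise for $v_1$. For the graph isomorphism I would define $\phi \colon \Lambda(R) \times \Lambda(S) \to \Lambda(R \times S)$ on vertices by $\phi(\rng{a}, \rng{b}) = \rng{(a,b)}$, which the identity shows is a well-defined bijection. To check it is a graph isomorphism I must verify that an edge is preserved in both directions: $(\rng{a},\rng{b})$ is adjacent to $(\rng{a'},\rng{b'})$ in the tensor product iff $\rng{a} \sim \rng{a'}$ and $\rng{b} \sim \rng{b'}$ are both edges, i.e. iff $aa' = a'a$ and $bb' = b'b$, which is exactly the condition $(a,b)(a',b') = (a',b')(a,b)$ defining adjacency in $\Lambda(R \times S)$. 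This last equivalence is a direct componentwise computation. The same verification applies verbatim to $\Lambda^1$.

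The main obstacle is the reverse inclusion in the ring identity, specifically producing the separating polynomial (or integer multiple) that extracts $(a,0)$ from $(a,b)$; this is where coprimality is essential and where some care is needed to confirm that the additive orders divide the ring cardinalities so that the congruence choice of $N$ does what is claimed. Everything downstream is bookkeeping once this separation lemma is in hand.
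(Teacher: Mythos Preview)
Your proposal is correct and rests on the same idea as the paper's proof: use the coprimality of $|R|$ and $|S|$ via a B\'ezout/CRT relation to separate the two coordinates. The paper's execution is marginally more direct---it fixes integers $m',n'$ with $|R|m'+|S|n'=1$ and, given any target $(c,d)=(P(a),Q(b))$, sets $K=|S|n'P+|R|m'Q$ so that $K((a,b))=(c,d)$ in one stroke---whereas you first extract $(a,0)$ and $(0,b)$ using an integer multiple $N\cdot(a,b)$ and then close under subring generation. Both arguments are the same CRT trick in slightly different packaging; the paper's version avoids the intermediate step, while yours makes the ``separation'' idea more explicit. Either way, the graph isomorphism and vertex-count statements follow exactly as you describe.
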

\begin{proof}
Let $|R|=m$ and $|S|=n$. Then $mm'+nn'=1$ for some
$m',n'\in\mathbb{Z}$. Clearly,  $\rng{(a,b)}\subseteq\rng{a}\times\rng{b}$ and
$\rng{(a,b)}_1\subseteq\rng{a}_1\times\rng{b}_1$. Let $(c,d)\in \rng{a}_1\times\rng{b}_1$.
Then there exist polynomials $P,Q \in \mathbb{Z}[x]$
such that $c=P(a)$ and $d=Q(b)$. Let $K=nn'P+mm'Q\in
\mathbb{Z}[x]$. Then $K(a)=c$ and $K(b)=d$, so that $(c,d)=K((a,b))\in
\rng{(a,b)}_1$. Hence, $\rng{(a,b)}_1=\rng{a}_1\times\rng{b}_1$.
Note that $(a,b) \in R \times S$ and $(c,d) \in R \times S$ commute if and only if $a$ and $c$ commute in $R$ and $b$ and $d$ commute in $S$. This implies that $\Lambda^1(R \times S) \cong \Lambda^1(R) \times \Lambda^1(S)$.
The proof for the nonunital version is similar.
\end{proof}
\begin{remark}
Let $R$ be a finite ring such that $|R|=p_1^{\alpha_1}p_2^{\alpha_2}\cdots
p_k^{\alpha_k}$, where $p_1,p_2,\ldots,p_k$ are distinct primes. Then it is well known that $R=\prod_{i=1}^k R_i$, where $|R_i|=p_i^{\alpha_i}$, see \cite{Sh30}. So
$v(R)=\prod_{i=1}^k v(R_i)$ and $v_1(R)=\prod_{i=1}^k v_1(R_i)$.
Also, $\Lambda^1(R)\cong \prod_{i=1}^k \Lambda^1(R_i)$ and
$\Lambda(R)\cong \prod_{i=1}^k \Lambda(R_i)$. So it suffices
to compute the (unital) compressed commuting graphs of rings whose cardinality is a power of a prime.
\end{remark}

We now consider the graph of a direct product $R=GF(p^n) \times GF(q^m)$ of two finite fields. In Section~\ref{sec:matrices} we are going to need this graph in the case when the two fields are equal and the general case is just slightly more involved.

\begin{theorem} \label{thm:direct_1}
Let $n, m \ge 1$ be integers, $p, q$ be primes, $s = \gcd(n,m)$ the greatest common divisor of $n$ and $m$ and let $R = GF(p^n) \times GF(q^m)$ be the direct product of fields with $p^n$  and $q^m$ elements. 
Then
\begin{equation} \label{eq:t}
v_1(R) = \begin{cases}
    d(n)d(m); & \text{if } p \ne q, \\
    d(n)d(m) + \sigma(s)-1; & \text{if }  p = q = 2 \text{ and }  s \text{ is even}, \\
    d(n)d(m) + \sigma(s); & \text{otherwise,} 
\end{cases}    
\end{equation}
and the unital compressed commuting graph of the ring $R$ is $\Lambda^1(R) \cong K_{v_1(R)}^\circ$.
\end{theorem}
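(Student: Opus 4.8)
The plan is to reduce everything to a count of unital cyclic subrings of $R$, since $R$ is commutative and hence $\Lambda^1(R) \cong K^\circ_{v_1(R)}$ holds automatically; only the value of $v_1(R)$ requires work. When $p \ne q$ we have $\gcd(|GF(p^n)|,|GF(q^m)|)=1$, so Proposition~\ref{prop:direct} together with Theorem~\ref{th:field} immediately gives $v_1(R) = v_1(GF(p^n))\,v_1(GF(q^m)) = d(n)d(m)$. The remaining work is the case $p=q$, where the cardinalities share the factor $p$ and Proposition~\ref{prop:direct} no longer applies.

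So assume $p=q$, so that $R = GF(p^n)\times GF(p^m)$ has characteristic $p$ and every unital subring is a $GF(p)$-subalgebra. For $(a,b)\in R$ I would write $\mu_a,\mu_b \in GF(p)[x]$ for the minimal polynomials of $a$ and $b$ over $GF(p)$; these are monic irreducibles of degrees $d\mid n$ and $e\mid m$, and $GF(p)[a]=GF(p^d)$, $GF(p)[b]=GF(p^e)$. Since $\rng{(a,b)}_1=\{(f(a),f(b)) : f\in GF(p)[x]\}$ is the image of the evaluation map $GF(p)[x]\to GF(p^d)\times GF(p^e)$, whose kernel is generated by $\mathrm{lcm}(\mu_a,\mu_b)$, the structure of the subring is governed by whether $\mu_a=\mu_b$. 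If $\mu_a\ne\mu_b$ (which is automatic when $d\ne e$), then $\mu_a,\mu_b$ are coprime and the Chinese Remainder Theorem identifies $\rng{(a,b)}_1$ with the full product $GF(p^d)\times GF(p^e)$; since the two projections recover $d$ and $e$, such full products are in bijection with the pairs $(d,e)$, $d\mid n$, $e\mid m$, for which $GF(p^d)\times GF(p^e)$ is generated by a single element, and this holds exactly when one can choose distinct monic irreducibles of degrees $d$ and $e$. For $d\ne e$ this is automatic, contributing $d(n)d(m)-d(s)$ pairs; for $d=e$ (so $d\mid s$) it requires at least two monic irreducibles of degree $d$ over $GF(p)$.

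If instead $\mu_a=\mu_b$, then $d=e$, the element $b$ is a Frobenius conjugate $b=a^{p^k}$ of $a$, and $\rng{(a,b)}_1 = \{(x,\mathrm{Frob}^k(x)) : x\in GF(p^d)\}$ is the graph of the automorphism $\mathrm{Frob}^k$ of $GF(p^d)$. For each $d\mid s$ this yields the $d$ graphs of the $d$ distinct automorphisms of $GF(p^d)$, each realized by a single generator and pairwise distinct (and distinct from every full product, being proper subrings); summing over $d\mid s$ gives $\sum_{d\mid s} d = \sigma(s)$ such subrings. Assembling the two families gives $v_1(R) = (d(n)d(m)-d(s)) + N + \sigma(s)$, where $N=\#\{d\mid s : \text{there are at least two monic irreducibles of degree } d \text{ over } GF(p)\}$.

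The crux is therefore the number-theoretic fact, via the Gauss formula $\frac{1}{d}\sum_{c\mid d}\mu(c)p^{d/c}$ for the count of monic irreducibles of degree $d$, that this count is at least $2$ for every $(p,d)$ except $(p,d)=(2,2)$, where it equals $1$. Granting this, $N=d(s)$ unless $p=2$ and $2\mid s$ (in which case $d=2$ is excluded and $N=d(s)-1$), and substituting the two values of $N$ produces exactly $d(n)d(m)+\sigma(s)$ in general and $d(n)d(m)+\sigma(s)-1$ when $p=q=2$ and $s$ is even, as claimed. I expect this last estimate, pinning down $(2,2)$ as the unique exceptional pair, to be the main technical obstacle, since it is the only place where the two non-trivial cases of the formula diverge.
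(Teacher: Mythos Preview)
Your proposal is correct and follows essentially the same route as the paper: the same split into the coprime case (handled by Proposition~\ref{prop:direct}), then for $p=q$ the same dichotomy based on whether $\mu_a=\mu_b$, yielding the graph-of-an-automorphism subrings (contributing $\sigma(s)$) versus the full-product subrings $GF(p^{d_1})\times GF(p^{d_2})$, with the exceptional pair $(p,d)=(2,2)$ identified via the Gauss count of irreducibles. The only difference is cosmetic bookkeeping (you write the full-product count as $(d(n)d(m)-d(s))+N$ rather than directly as $d(n)d(m)$ or $d(n)d(m)-1$) and that the paper supplies an explicit lower-bound argument for $N_p(d)\ge 2$ where you simply invoke the Gauss formula and flag this as the main technical point.
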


\begin{proof}
The ring $R$ is commutative, so $\Lambda^1(R) \cong K_{v_1(R)}^\circ$.
If $p \ne q$, the result follows from Theorem~\ref{th:field} and Proposition~\ref{prop:direct}.

Assume now that $p = q$. For positive integer $r$ and any $\alpha \in GF(p^r)$ we denote by $m_\alpha$ the monic minimal polynomial of $\alpha$ over $GF(p)$.
Let $A=(\alpha,\beta) \in R$ be an arbitrary element and denote its monic minimal polynomial over $GF(p)$ by $m_A$. Clearly, $m_A=LCM(m_\alpha,m_\beta)$, the least common multiple of $m_\alpha$ and $m_\beta$.
If $m_\alpha \ne m_\beta$, then $m_A=m_\alpha m_\beta$ since polynomials $m_\alpha$ and $m_\beta$ are irreducible. Hence,
\begin{align*}
    \dim_{GF(p)} \rng{A}_1 &=\deg m_A=\deg m_\alpha + \deg m_\beta = \dim_{GF(p)} \rng{\alpha}_1 +\dim_{GF(p)} \rng{\beta}_1 \\
    &=\dim_{GF(p)} \big(\rng{\alpha}_1 \times \rng{\beta}_1\big).
\end{align*}
Since, $\rng{A}_1 \subseteq \rng{\alpha}_1 \times \rng{\beta}_1$, we conclude that
\begin{equation}\label{eq:oplus}
    \rng{A}_1 = \rng{\alpha}_1 \times \rng{\beta}_1=GF(p^{d_1}) \times GF(p^{d_2}),
\end{equation}
where $d_1=\deg m_\alpha$ and $d_2=\deg m_\beta$.

On the other hand, if $m_\alpha = m_\beta$, let $d=\deg m_\alpha$, so that $\alpha, \beta \in GF(p^d)$. Since also $\alpha \in GF(p^n)$, $d$ divides $n$, and similarly $d$ divides $m$, so that $d$ divides $s=\gcd(n,m)$. There exists an automorphism $\phi$ of $GF(p^d)$ such that $\phi(\alpha)=\beta$.
It follows that
\begin{equation}\label{eq:phi}
    \rng{A}_1=\{(\gamma,\phi(\gamma)) \in R \mid \gamma \in GF(p^d)\}
\end{equation}
since we have for any $P \in \Z[x]$
$$P(A)=(P(\alpha),P(\beta))=(P(\alpha),P(\phi(\alpha)))=(P(\alpha),\phi(P(\alpha))).$$
Every automorphism of the field $GF(p^d)$ is of the form $\phi(x) = x^{p^j}$ for some integer $j$ with $0 \le j \le d-1$, see \cite[Theorem 2.21]{LiNi94}.

Now we count the number of unital subrings of $R$ generated by one element. They are all of the form \eqref{eq:oplus} or \eqref{eq:phi}. For any divisor $d$ of $s$ we have $d$ subrings of the form \eqref{eq:phi}. In total there are $\sum_{d | s} d = \sigma(s)$ such subrings.

Let $d_1$ divide $n$ and $d_2$ divide $m$. If $d_1 \ne d_2$ then the minimal polynomial of any generator of $GF(p^{d_1})$ is different from the minimal polynomial of any generator of $GF(p^{d_2})$.
So the unital ring $GF(p^{d_1}) \times GF(p^{d_2})$ is generated by one element according to equation \eqref{eq:oplus}. Now suppose $d:=d_1=d_2$. There exist only one irreducible polynomial of degree 2 over $GF(2)$, namely, $m(x) = x^2+x+1$. So the ring $GF(2^2) \times GF(2^2)$ with $p=2$ and $d=2$ is not generated by one element. So suppose $p>2$ or $d \ne 2$.
Then, as proved below,  there exist at least two distinct irreducible polynomials of degree $d$ over $GF(p)$, so again the unital ring $GF(p^{d}) \times GF(p^{d})$ is generated by one element. Indeed, the number of monic irreducible polynomials of degree $d$ over $GF(p)$ equals
$$ N_p(d) = \frac1d \sum_{r | d} \mu(r)p^{\frac{d}{r}},$$
where
$$\mu(r) = \begin{cases} 1; & \text{if } r=1, \\
(-1)^z; & \text{if } r \text{ is a product of } z \text{ distinct primes}, \\0; & \text{if } r \text{ is divisible by a square of a prime}\end{cases}$$
is the M\"obius function, see \cite[Theorem 3.25]{LiNi94}. If $d=1$, we have $N_p(d) = p>1$. If $d=2$, we have $N_p(d) = \frac12(p^2-p)$, which is greater then 1 as soon as $p>2$. If $d > 2$, the first term in the sum is $p^d$, and the rest can be estimated from below by $- \sum_{j=1}^{\frac{d}{s}} p^j$, since $\mu(j) \ge -1$, where $s\ge 2$ is the smallest prime divisor of $d$. It follows that
$$ N_p(d) \ge \frac1d \left(p^d - \sum_{j=1}^{\lfloor\frac{d}{2}\rfloor} p^j\right) \ge \frac1d \left(p^d - \left\lfloor\textstyle\frac{d}{2}\right\rfloor p^{\lfloor\frac{d}{2}\rfloor}\right) = \frac{p^{\lfloor\frac{d}{2}\rfloor}}{d}\left(p^{\lceil\frac{d}{2}\rceil} - \left\lfloor\textstyle\frac{d}{2}\right\rfloor\right) =v.$$
It can be checked by hand that if $p=2$ and $d \le 5$ then $v>1$. Let $p=2$ and $d \ge 6$. Then $p^{\lfloor\frac{d}{2}\rfloor} \ge d$ and $p^{\lceil\frac{d}{2}\rceil}>\lfloor\frac{d}{2}\rfloor$, which can be proved by induction separately for odd and even $d$.
If $p \ge 3$ and $d \ge 3$ then also $p^{\lfloor\frac{d}{2}\rfloor} \ge d$ and $p^{\lceil\frac{d}{2}\rceil}>\lfloor\frac{d}{2}\rfloor$, which is again proved by induction with $p$ fixed.
We conclude that in any case $v>1$, so that $N_p(d) \ge 2$.

So, if $s$ is odd or $p>2$, there are $d(n)d(m)$ unital subrings of $R$ generated by one element of the form \eqref{eq:oplus}. If $p=2$ and $s$ is even there are $d(n)d(m)-1$ unital subrings of $R$ generated by one element of the form \eqref{eq:oplus}.

Adding $\sigma(s)$ gives the desired result for the number $v_1(R)$.
\end{proof}

\begin{theorem} \label{thm:direct}
Let $n, m \ge 1$ be integers, $p, q$ be primes, $s = \gcd(n,m)$ the greatest common divisor of $n$ and $m$ and let $R = GF(p^n) \times GF(q^m)$ be the direct product of fields with $p^n$  and $q^m$ elements. 
Then
\begin{equation} \label{eq:t'}
v(R) = \begin{cases}
    (d(n)+1)(d(m)+1); & \text{if } p \ne q, \\
    (d(n)+1)(d(m)+1) + \sigma(s)-2; & \text{if }  p = q = 2 \text{ and }  s \text{ is even}, \\
    (d(n)+1)(d(m)+1) + \sigma(s)-1; & \text{if }  p = q = 2 \text{ and }  s \text{ is odd}, \\
    (d(n)+1)(d(m)+1) + \sigma(s); & \text{otherwise,} 
\end{cases}    
\end{equation}
and the compressed commuting graph of the ring $R$ is $\Lambda(R) \cong K_{v(R)}^\circ$. 
\end{theorem}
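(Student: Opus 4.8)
The plan is to reduce everything to the unital count already established in Theorem~\ref{thm:direct_1}. Since $R$ is commutative we have $\Lambda(R) \cong K^\circ_{v(R)}$, so it suffices to determine $v(R)$, the number of subrings of $R$ generated by a single element. When $p \ne q$ this is immediate: the cardinalities $p^n$ and $q^m$ are coprime, so Proposition~\ref{prop:direct} gives $v(R) = v(GF(p^n))\, v(GF(q^m))$, and Theorem~\ref{th:field} yields $v(GF(p^n)) = d(n)+1$ and $v(GF(q^m)) = d(m)+1$, hence $v(R) = (d(n)+1)(d(m)+1)$. The substance of the argument is therefore the case $p = q$, which I would treat by comparing nonunital generators directly with the unital analysis.

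The key observation is that if $A = (\alpha,\beta)$ has both coordinates nonzero, then $\alpha$ and $\beta$ lie in the multiplicative groups of their fields, so $\alpha^{p^n-1}=1$ and $\beta^{p^m-1}=1$; taking $N=\mathrm{lcm}(p^n-1,p^m-1)\ge 1$ gives $A^N=(1,1)=1_R$, whence $1_R \in \rng{A}$ and therefore $\rng{A}=\rng{A}_1$. Thus generators with both coordinates nonzero produce exactly the unital singly-generated subrings, while the genuinely new, identity-free subrings all arise from generators with a vanishing coordinate. Accordingly I would classify each subring $\rng{A}$ by the vanishing pattern of $A=(\alpha,\beta)$: the case $\alpha=\beta=0$ gives the single subring $\{0\}$; the case $\alpha=0\ne\beta$ gives the $d(m)$ subrings $\{0\}\times GF(p^{d_2})$ with $d_2\mid m$; the case $\alpha\ne 0=\beta$ gives the $d(n)$ subrings $GF(p^{d_1})\times\{0\}$ with $d_1\mid n$; and the case $\alpha,\beta\ne 0$ gives precisely the unital singly-generated subrings that admit a generator with both coordinates nonzero. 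None of the subrings in the first three cases contains $1_R=(1,1)$, and the cases are separated by their projections, so all these subrings are mutually distinct and distinct from those of the fourth case. This yields $v(R)=1+d(n)+d(m)+t$, where $t$ is the number of unital singly-generated subrings possessing a both-nonzero generator.

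It then remains to express $t$ via $v_1(R)$, i.e. to decide which of the $v_1(R)$ unital singly-generated subrings fail to admit a generator with both coordinates nonzero. Revisiting the classification in the proof of Theorem~\ref{thm:direct_1}, I would treat the two families \eqref{eq:oplus} and \eqref{eq:phi} separately. Every diagonal subring \eqref{eq:phi} over $GF(p^d)$ admits a nonzero generator: for $d\ge 2$ every field generator is nonzero, and for $d=1$ the subring is the diagonal copy of $GF(p)$, generated by $(1,1)$. For a product subring $GF(p^{d_1})\times GF(p^{d_2})$ from \eqref{eq:oplus} one needs generators $\alpha,\beta$ that are nonzero, generate the respective subfields, and have distinct minimal polynomials; when $d_1\ne d_2$ the differing degrees already force distinct minimal polynomials and nonzero generators are available, so such subrings are never lost, whereas when $d_1=d_2=d$ one needs two distinct nonzero generators with distinct minimal polynomials. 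For $d\ge 2$ this is possible precisely when there are two distinct monic irreducible polynomials of degree $d$ over $GF(p)$, i.e. $N_p(d)\ge 2$, which fails only for $p=2,d=2$, and for $d=1$ it is possible precisely when $GF(p)$ has two distinct nonzero elements, which fails only for $p=2$. The first failure concerns $GF(4)\times GF(4)$, which is not singly generated at all and is therefore already absent from $v_1(R)$ (this is the $-1$ in Theorem~\ref{thm:direct_1}); the second concerns $GF(2)\times GF(2)$, whose only unital generators are $(0,1)$ and $(1,0)$, each with a zero coordinate. Hence exactly one subring counted in $v_1(R)$ has no both-nonzero generator when $p=2$, and none when $p$ is odd, so $t=v_1(R)-\delta$ with $\delta=1$ if $p=q=2$ and $\delta=0$ otherwise. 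This gives the uniform relation $v(R)=v_1(R)+d(n)+d(m)+1-\delta$, and substituting the values of $v_1(R)$ from Theorem~\ref{thm:direct_1} together with $(d(n)+1)(d(m)+1)=d(n)d(m)+d(n)+d(m)+1$ reproduces each branch of \eqref{eq:t'}.

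The step I expect to be the main obstacle is the last one: cleanly isolating $GF(2)\times GF(2)$ as the unique unital singly-generated subring lacking a both-nonzero generator. This is delicate because the closely related subring $GF(4)\times GF(4)$ over $GF(2)$ is likewise deficient, but for a different reason, namely it is not singly generated at all, so it never enters $v_1(R)$ and must not be discounted a second time. Keeping careful track of which deficiencies already removed a subring from $v_1(R)$ versus which ones only remove it from the nonunital count $t$ is the crux, and matching these against the cases $s$ even, $s$ odd, and $p$ odd is exactly what produces the four branches of \eqref{eq:t'}.
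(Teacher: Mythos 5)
Your proposal is correct and follows essentially the same route as the paper's proof: classify generators $(\alpha,\beta)$ by which coordinates vanish, observe that a generator with both coordinates nonzero is invertible so that $\rng{A}=\rng{A}_1$, and then determine which unital singly generated subrings admit an invertible (equivalently, both-coordinates-nonzero) generator, with $GF(2)\times GF(2)$ as the unique exception when $p=q=2$. The only cosmetic difference is that you obtain $1\in\rng{A}$ from $A^N=1$ with $N=\mathrm{lcm}(p^n-1,p^m-1)$, whereas the paper argues via the nonzero constant term of the minimal polynomial of an invertible element.
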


\begin{proof}
As in the proof of Theorem~\ref{thm:direct_1} we have $\Lambda(R) \cong K_{v(R)}^\circ$.
If $p \ne q$, the result follows from Theorem~\ref{th:field} and Proposition~\ref{prop:direct}.

Now suppose $p=q$ so that $R$ is an algebra over $GF(p)$. Let $A=(\alpha,\beta) \in R$.
If $\alpha=\beta=0$, then $\rng{A}=0$.
If $\alpha=0$ and $\beta \ne 0$, then $\rng{A}=GF(p^d) \times 0$ where $d$, the degree of the minimal polynomial of $\alpha$ over $GF(p)$, divides $n$.
Similarly, if $\alpha =0$ and $\beta \ne 0$, then $\rng{A}=0 \times GF(p^{d'})$ where $d'$, the degree of the minimal polynomial of $\beta$ over $GF(p)$, divides $m$.
Note that none of the above rings contains the identity element $1$ and there are $d(n)+d(m)+1$ of them.

Now, if $\alpha \ne 0$ and $\beta \ne 0$, then $A$ is invertible in $R$ so its minimal polynomial over $GF(p)$ must have nonzero constant term. Hence, $1 \in \rng{A}$ and $\rng{A}=\rng{A}_1$. The arguments below rely on the proof of Theorem~\ref{thm:direct_1}. Since we already know the number of unital subrings of $R$ generated by one element, we just need to check which ones are generated by one element also as nonunital rings.
We need to consider all the rings $S$ of type \eqref{eq:oplus} and \eqref{eq:phi}. In order to prove that $S$ is generated by one element as a nonunital ring, it is enough to prove that $S$ is generated as a unital ring by an invertible element. If $S$ is of type \eqref{eq:phi}, it is generated by $(\alpha,\phi(\alpha))$, where $\alpha$ is a generator of $GF(p^d)$.
Now let $S=GF(p^{d_1}) \times GF(p^{d_2})$. If $d_1 \ne d_2$, then $S$ is generated by $(\alpha,\beta)$, where $\alpha$ generates $GF(p^{d_1})$ and $\beta$ generates $GF(p^{d_2})$.
So assume $d:=d_1=d_2$. If $d>1$, then necessarily $d \ne 2$ or $p>2$ and there exist at least two distinct irreducible monic polynomials of degree $d$ over $GF(p)$ (see the proof of Theorem~\ref{thm:direct_1}), say $P$ and $Q$. Let $\alpha,\beta \in GF(p^d)$ be the zero of $P$ and $Q$, respectively. Then $S$ is generated by $(\alpha,\beta)$.
If $d=1$ and $p>2$, then $S=GF(p)\times GF(p)$ is generated by $(1,2)$. Finally, if $d=1$ and $p=2$, then $S=GF(2) \times GF(2)$. In this case $S$ is not generated by one element as a nonunital ring, since it has $4$ elements, but any subring generated by one element contains at most $2$ elements.

So in case $p=q$ we have $v(R)=v_1(R)+(d(n)+d(m)+1)$ if $p>2$, and $v(R)=v_1(R)+(d(n)+d(m)+1)-1$ if $p=2$. The conclusion now follows.
\end{proof}

Next we consider the (unital) compressed commuting graph of a specific factor ring of the polynomial ring, which will be used in next section. 

\begin{proposition}\label{thm:poly-unital}
Let $n$ be an integer, $p$ a prime, and $R= GF(p^n)[x]/(x^2)$ the factor ring of the polynomial ring. Then
$$v_1(R)=d(n)+\sum_{d|n} \frac{p^n-1}{p^d-1}$$ and $\Lambda^1(R)\cong
K_{v_1(R)}^\circ$.
\end{proposition}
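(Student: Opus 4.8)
The plan is to use that $R=GF(p^n)[x]/(x^2)$ is a finite commutative local ring, so that the graph isomorphism is immediate and the entire content of the statement lies in the enumeration $v_1(R)$ of unital subrings of $R$ generated by a single element. Since $R$ is commutative, every two elements commute, so every two vertices of $\Lambda^1(R)$ are joined and each carries a loop; hence $\Lambda^1(R)\cong K^\circ_{v_1(R)}$ with no further argument, exactly as in the proof of Theorem~\ref{thm:direct_1}. It remains to count the one-generated unital subrings.

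Write $F=GF(p^n)$ and represent a general element of $R$ as $a+bx$ with $a,b\in F$ and $x^2=0$; note $R$ has characteristic $p$, so $\rng{a+bx}_1=GF(p)[a+bx]$. I would split the count according to whether $b=0$ or $b\neq 0$. If $b=0$, then $\rng{a}_1$ is the subfield of $F$ generated by $a$ over the prime field, and since $F$ has a unique subfield $GF(p^d)$ for each divisor $d\mid n$ (as in Theorem~\ref{th:field}), these contribute exactly $d(n)$ subrings. Each of them lies in the copy $F\cdot 1$ of $F$ and therefore has trivial nilradical.

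The crux is the case $b\neq 0$. The key observation is the Frobenius identity: since $x^2=0$ we have $x^p=0$, and as $z\mapsto z^p$ is a ring endomorphism of $R$ in characteristic $p$, $(a+bx)^p=a^p+b^px^p=a^p$; iterating gives $(a+bx)^{p^j}=a^{p^j}$ for all $j$. Setting $K=GF(p)[a]=GF(p^d)$ with $d=[K:GF(p)]\mid n$ and taking $j=d$ yields $(a+bx)^{p^d}=a^{p^d}=a$, so $a\in\rng{a+bx}_1$ and hence $bx=(a+bx)-a\in\rng{a+bx}_1$. Consequently $K\cdot 1$ and $K\cdot bx$ both lie in $\rng{a+bx}_1$, giving $K\oplus (Kb)x\subseteq\rng{a+bx}_1$; the reverse inclusion is immediate since $K\oplus (Kb)x$ is a unital subring containing $a+bx$. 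Thus $\rng{a+bx}_1=K\oplus (Kb)x$, where $Kb=\{cb\mid c\in K\}$ is the one-dimensional $K$-subspace of $F$ spanned by $b$.

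Finally I would organize the count. Each subring $S=K\oplus (Kb)x$ recovers $K$ as the set of constant parts of its elements (equivalently $S/\mathrm{nil}(S)$) and recovers the line $Kb$ as its nilradical $\mathrm{nil}(S)=(Kb)x$; hence $S$ is determined by the pair consisting of the subfield $K=GF(p^d)$ and the one-dimensional $K$-subspace $V=Kb\subseteq F$, and conversely every such pair arises from a generator $a$ of $K$ and a nonzero $b\in V$. For a fixed $d\mid n$, viewing $F$ as a vector space of dimension $n/d$ over $K=GF(p^d)$, the number of one-dimensional $K$-subspaces is $\frac{(p^d)^{n/d}-1}{p^d-1}=\frac{p^n-1}{p^d-1}$. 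Summing over $d\mid n$ and noting that these $b\neq 0$ subrings have nontrivial nilradical and so are disjoint from the $d(n)$ subfields of the $b=0$ case gives $v_1(R)=d(n)+\sum_{d\mid n}\frac{p^n-1}{p^d-1}$, as required. The main obstacle is establishing the precise structure $\rng{a+bx}_1=K\oplus (Kb)x$; the Frobenius identity $(a+bx)^{p^d}=a$ is what makes this clean, since it exhibits $a$, and hence $bx$, inside the one-generated subring. Once this is in place, the enumeration reduces to counting points of a projective space over $GF(p^d)$, and the bookkeeping between the two cases is routine.
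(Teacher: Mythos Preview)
Your proof is correct and takes a genuinely different route from the paper's. The paper analyzes $\rng{a+bX}_1$ via the Taylor identity $g(a+bX)=g(a)+g'(a)bX$, then characterizes when $\rng{C}_1=\rng{C'}_1$ and proves the converse direction by an explicit polynomial construction $S=P+m_a'(a)^{-1}(R-P')m_a$ satisfying $S(a)=a'$ and $S'(a)b=b'$. You instead exploit the Frobenius endomorphism: from $(a+bx)^{p^d}=a$ you obtain $a$ and hence $bx$ inside $\rng{a+bx}_1$ directly, giving the structural formula $\rng{a+bx}_1=K\oplus (Kb)x$ without any derivative or polynomial-interpolation argument. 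Your approach is shorter and more conceptual in this setting, and it makes the bijection with pairs $(K,V)$ (subfield, projective point over $K$) immediate; the paper's derivative method, on the other hand, is characteristic-free in spirit and would extend more naturally to $GF(p^n)[x]/(x^k)$ for larger $k$, where the single Frobenius power no longer kills the nilpotent part so cleanly when $k>p$.
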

\begin{proof}
Let $X=x+(x^2)\in R$. Then $R=GF(p^n)+GF(p^n)X$ and the sum is a direct sum of additive groups. Let $C=a+bX\in R$ where $a,b\in GF(p^n)$. It
is clear that $\rng{C}_1=GF(p)[C]$. If $b=0$ then
$C=a\in GF(p^n)$ and $\rng{C}_1$ is a unital subring of $GF(p^n)$. We have $d(n)$ such subrings generated by one element.

Now, let $b\neq 0$ and
$\deg m_a=d$, where $m_a$ is the minimal polynomial of $a$ over $GF(p)$. Note that $d$ divides $n$. Let $g \in \Z[x]$.
Since
$$g(C)=g(a+bX)=g(a)+g'(a)bX,$$
we have $g(C)=0$ if and only if $g(a)=g'(a)=0$.

Let $C=a+bX$ and $C'=a'+b'X$ be two elements of $R$ with $b \ne 0$ and $b'\ne 0$. 
We claim that $\rng{C}_1=\rng{C'}_1$ if and only if $\rng{a}_1=\rng{a'}_1=GF(p^d)$ and $b GF(p^d)=b' GF(p^d)$, i.e., $b$ and $b'$ individually generate the same $1$-dimensional vector subspace of $GF(p^n)$ over $GF(p^d)$.
Assume first that 
$\rng{C}_1=\rng{C'}_1$. Then there exist two
polynomials $P,Q\in\mathbb{Z}[x]$ such that
$P(a)+P'(a)bX=P(C)=C'=a'+b'X$ and
$Q(a')+Q'(a')b'X=Q(C')=C=a+bX$. This implies
$$a'=P(a), \quad b'=P'(a)b, \quad a=Q(a'), \quad \text{and} \quad b=Q'(a')b'.$$
So $\rng{a}_1=\rng{a'}_1=GF(p^d)$ and
$b GF(p^d)=b' GF(p^d)$. Conversely, assume
that $\rng{a}_1=\rng{a'}_1=GF(p^d)$ and $b GF(p^d)=b' GF(p^d)$. Then there exist two
polynomials $P,R\in\mathbb{Z}[x]$ such that $a'=P(a)$ and $b'=R(a)b$ since $a$ generates $GF(p^d)$.
Let $S=P+m_a'(a)^{-1}(R-P')m_a$. Then $S(a)=P(a)=a'$. Furthermore,
$$ S'(a) =P'(a)+m_a'(a)^{-1}(R'(a)-P''(a))m_a(a)+m_a'(a)^{-1}(R(a)-P'(a))m_a'(a)=R(a), $$
so that $S'(a)b=R(a)b=b'$.
Hence, $S(C)=S(a)+S'(a)bX=a'+b'X=C'$, which implies $C' \in \rng{C}_1$.
Similarly, $C \in \rng{C'}_1$, and consequently $\rng{C}_1=\rng{C'}_1$.

Since the number of $1$-dimensional vector subspaces of $GF(p^n)$ over $GF(p^d)$ equals $\frac{p^n-1}{p^d-1}$ it follows that
the number of unital subrings of $R$ of the form $\rng{C}_1$, where $C=a+bX$ with $b \ne 0$, is equal to
$\sum_{d|n} \frac{p^n-1}{p^d-1}$.

We conclude that $v_1(R)=d(n)+\sum_{d|n} \frac{p^n-1}{p^d-1}$ and $\Lambda^1(R)\cong K_{v_1(R)}^\circ$.
\end{proof}

\begin{proposition}\label{thm:poly}
Let $n$ be an integer, $p$ a prime, and $R= GF(p^n)[x]/(x^2)$ the factor ring of the polynomial ring. Then
$$v(R)=1+d(n)+\frac{p^n-1}{p-1}+\sum_{d|n} \frac{p^n-1}{p^d-1}$$ 
and $\Lambda(R)\cong K_{v(R)}^\circ$.
\end{proposition}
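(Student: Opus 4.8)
The plan is to reduce the nonunital count $v(R)$ to the unital count $v_1(R)$ computed in Proposition~\ref{thm:poly-unital}, exactly as Theorem~\ref{thm:direct} reduced to Theorem~\ref{thm:direct_1}. Since $R=GF(p^n)[x]/(x^2)$ is commutative, $\Lambda(R)\cong K_{v(R)}^\circ$ is immediate, so the entire content is the vertex count. First I would set $X=x+(x^2)$ and write a general element as $C=a+bX$ with $a,b\in GF(p^n)$, noting $\rng{C}\subseteq\rng{C}_1=GF(p)[C]$ and that the two coincide precisely when $1\in\rng{C}$. So the task is to decide, for each unital subring $S$ already enumerated in Proposition~\ref{thm:poly-unital}, whether $S$ is also generated by a single element as a \emph{nonunital} ring, together with counting the extra nonunital subrings that contain no generator at all in common with the unital list.

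The key structural fact I would use is that an element $C=a+bX$ satisfies $1\in\rng{C}$ if and only if $C$ is a unit of $R$, which happens exactly when $a\neq 0$ (the nilpotent part $bX$ never obstructs invertibility modulo $x^2$ when $a$ is a unit of the field). Thus for every generator with $a\neq 0$ the nonunital and unital subrings coincide, and those vertices carry over unchanged. The subrings requiring separate treatment are precisely those arising from generators with $a=0$, i.e.\ $C=bX$: here $C^2=b^2X^2=0$, so $\rng{C}=\{kbX \mid k\in\Z\}=GF(p)\cdot bX$, a one-dimensional $GF(p)$-space spanned by $bX$. I would then enumerate these: $\rng{bX}=\rng{b'X}$ iff $b$ and $b'$ span the same line over $GF(p)$, giving exactly $\frac{p^n-1}{p-1}$ distinct such subrings for $b\neq 0$, plus the single zero subring $\rng{0}=0$. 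This yields the two extra blocks $1+\frac{p^n-1}{p-1}$ in the stated formula.

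The remaining bookkeeping is to confirm that no unital subring $S$ from the list \eqref{eq:oplus}-type enumeration with $b\neq 0$ fails to have an invertible single generator: whenever $S=\rng{C}_1$ with $C=a+bX$, $b\neq 0$, I can replace $C$ by $C+c$ for a suitable scalar $c\in GF(p^d)\subseteq\rng{C}_1$ to make the field part nonzero without changing the unital subring, so $S$ is generated by a unit and hence $S=\rng{C+c}$ as a nonunital ring too. For the pure-field subrings $\rng{a}_1=GF(p^d)$ with $b=0$, note $GF(p^d)$ contains the unit $1$, which is a single generator of $GF(p^d)$ as a field, and any primitive element generates it nonunitally as well since its minimal polynomial has nonzero constant term; these $d(n)$ vertices also carry over. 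Adding the carried-over $d(n)+\sum_{d|n}\frac{p^n-1}{p^d-1}$ vertices to the $1+\frac{p^n-1}{p-1}$ new ones gives $v(R)=1+d(n)+\frac{p^n-1}{p-1}+\sum_{d|n}\frac{p^n-1}{p^d-1}$.

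The main obstacle I anticipate is the replacement argument in the middle block: verifying that every unital subring with a nonzero nilpotent direction genuinely contains an invertible generator, rather than only a generator with zero field part. This requires knowing that the element $a$ (or a shifted version $a+c$) still generates the full $GF(p^d)$ and that the shift does not collapse the $X$-coefficient condition $bGF(p^d)=b'GF(p^d)$ identified in Proposition~\ref{thm:poly-unital}; since shifting by a constant $c\in GF(p^d)$ leaves the derivative term untouched, the line $bGF(p^d)$ is preserved, so the argument should close cleanly. Everything else is direct counting inherited from the unital case.
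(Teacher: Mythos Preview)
Your proposal is correct and follows essentially the same route as the paper's proof: both split according to whether $b=0$, whether $C$ is invertible (equivalently $a\neq 0$), and count the nilpotent lines $GF(p)bX$ separately. The only noteworthy difference is that the paper simply asserts that each unital subring with $b\neq 0$ ``is generated by an invertible element'' without argument, whereas you actually supply the shift-by-a-constant justification; since the only case requiring a shift is $a=0$ (where $d=1$ and $c=1$ works), your argument closes cleanly just as you anticipated.
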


\begin{proof}
As in the proof of Proposition~\ref{thm:poly-unital}, let $X=x+(x^2)\in R$ and $C=a+bX\in R$ where $a,b\in GF(p^n)$. 
If $b=0$ then $C=a\in GF(p^n)$ and $\rng{C}$ is a subring of $GF(p^n)$. We have $d(n)+1$ such subrings generated by one element.

Let $b\neq 0$. If $C$ is invertible in $R$, then the ring $\rng{C}$ is unital and 
by Proposition~\ref{thm:poly-unital} we have
$\sum_{d|n} \frac{p^n-1}{p^d-1}$ such subrings, each of them is generated by an invertible element. So, let $C$ be non-invertible.
Then $a=0$ and the minimal polynomial of $C$ over $GF(p)$ is $m_C(x)=x^2$. Furthermore, we have $GF(p)bX = \rng{bX} = \rng{b'X} = GF(p)b'X$
if and only if $b GF(p)=b' GF(p)$, i.e., $b$ and $b'$ individually generate the same $1$-dimensional vector subspace of $GF(p^n)$ over $GF(p)$.
So we have
$\frac{p^n-1}{p-1}$ such subrings. 
We conclude that $v(R)=1+d(n)+\frac{p^n-1}{p-1}+\sum_{d|n} \frac{p^n-1}{p^d-1}$ and $\Lambda(R)\cong K_{v(R)}^\circ.$  
\end{proof}

\section{Rings of $2 \times 2$ matrices over finite fields}\label{sec:matrices}

In this section we describe (unital) compressed commuting graphs of rings of $2 \times 2$ matrices over finite fields.

Let $\F$ be a field and denote by $\mm_2(\F)$ the ring of $2\times 2$ matrices over $\F$. It is well-known (see \cite{AkGhHaMo04}) that the commuting graph of $\mm_2(\F)$ is not connected. 
It is composed of cliques containing all non-scalar matrices of the form $aA + bI$, where $A \in \mm_2(\F)$ is a nonscalar matrix and $a, b \in \F$. If $\F$ is the finite field $GF(p^n)$ then $\mm_2(GF(p^n))$ contains $p^{4n}$ elements, and its commuting graph $\Gamma(\mm_2(GF(p^n)))$ has $p^{4n}-p^n$ vertices.
Each clique of $\Gamma(\mm_2(GF(p^n)))$ contains $p^n(p^n-1)=p^{2n}-p^n$ vertices, thus
$$\Gamma(\mm_2(GF(p^n))) = (p^{2n} + p^n + 1)K_{p^{2n}-p^n} .$$

\begin{theorem} \label{th:M2-unital}
Let $n$ be an integer, $p$ a prime, and $GF(p^n)$ the field with $p^n$ elements. Let 
\begin{align*}
   a(n) &= \begin{cases}
    d(n)^2 - d(n) + \sigma(n)-1; & \text{if } p =2 \text{ and } n \text{ is even}, \\
    d(n)^2 - d(n) + \sigma(n); & \text{if } p >2 \text{ or } n \text{ is odd}, 
    \end{cases}\\
   b(n) &= \sum_{d | n} \frac{p^n-1}{p^d-1}, \quad \text{and} \quad  c(n) = d(2n)-d(n).
\end{align*} 
Then the unital compressed commuting graph of the ring $\mm_2(GF(p^n))$ is 
$$ \Lambda^1(\mm_2(GF(p^n))) \cong K_{d(n)}^\circ \vee \left(\tfrac{p^{2n} + p^n}{2}K_{a(n)}^\circ \cup (p^n + 1)K_{b(n)}^\circ \cup \tfrac{p^{2n} - p^n}{2}K_{c(n)}^\circ\right) .$$
\end{theorem}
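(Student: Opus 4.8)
The plan is to peel off the central part as a join and then decompose the remaining non-central vertices according to maximal commutative subalgebras. Throughout write $\F=GF(p^n)$ and $R=\mm_2(\F)$. By Remark~\ref{rem:def} the vertices of $\Lambda^1(R)$ are the unital subrings $\rng{A}_1=GF(p)[A]$ generated by a single matrix, with $\rng{A}_1$ adjacent to $\rng{B}_1$ iff $AB=BA$. The center $Z(R)=\F I$ is a field isomorphic to $\F$, and the vertices generated by scalar matrices are precisely the unital subrings generated by central elements; by Theorem~\ref{th:field} there are $d(n)$ of them, one for each subfield of $\F$, and all of them are adjacent to every vertex. These form the clique $K^\circ_{d(n)}$ with which we take the join, so it remains to analyze the induced subgraph on the non-central vertices.

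For a non-scalar $A$ the minimal polynomial over $\F$ has degree $2$, so $\CC(A)=\F[A]$ is a $2$-dimensional commutative $\F$-subalgebra; these are exactly the maximal commutative subalgebras, and they are in bijection with the cliques of $\Gamma(R)$ recalled at the start of the section (hence there are $p^{2n}+p^n+1$ of them). For non-scalar $A,B$ I would show that $AB=BA$ iff $\F[A]=\F[B]$ (if they commute then $B\in\CC(A)=\F[A]$, forcing $\F[B]=\F[A]$ by dimension), and that a non-central vertex $\rng{A}_1$ determines the unique maximal commutative subalgebra $\F[A]$ containing $A$ (if $\rng{A}_1=\rng{B}_1$ with $A$ non-scalar then $B\in\F[A]$ is non-scalar, so $\F[B]=\F[A]$). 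Consequently the non-central vertices split into vertex-disjoint cliques-with-loops, one per maximal commutative subalgebra $S$, and the number of non-central vertices lying in $S$ equals $v_1(S)-d(n)$: the total number $v_1(S)$ of one-generated unital subrings of $S$, minus the $d(n)$ arising from the scalars $\F I\subseteq S$, which are the same for every $S$ and coincide with the global center already accounted for in the join. I then classify $S$ by the characteristic polynomial of a generator: it either splits with distinct roots, giving $S\cong\F\times\F$; is irreducible, giving $S\cong GF(p^{2n})$; or has a repeated root, giving $S\cong\F[x]/(x^2)$ (generated by a scalar plus a rank-one nilpotent).

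For each type I read $v_1(S)-d(n)$ off the earlier results: Theorem~\ref{thm:direct_1} with $m=n$, $p=q$, $s=n$ gives $v_1(\F\times\F)-d(n)=a(n)$; Theorem~\ref{th:field} gives $v_1(GF(p^{2n}))-d(n)=d(2n)-d(n)=c(n)$; and Proposition~\ref{thm:poly-unital} gives $v_1(\F[x]/(x^2))-d(n)=b(n)$. It then remains to count the subalgebras of each type. A diagonalizable generator with distinct eigenvalues determines $S$ by the unordered pair of its eigenlines, so the split type is counted by unordered pairs of distinct one-dimensional subspaces of $\F^2$, of which there are $\binom{p^n+1}{2}=\tfrac{p^{2n}+p^n}{2}$; a rank-one nilpotent $N$ has $\ker N=\operatorname{im}N$ a single line and $\F[N]$ depends only on that line, giving $p^n+1$ subalgebras of local type. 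Since the total number of maximal commutative subalgebras is $p^{2n}+p^n+1$, the field type accounts for the remaining
$$
(p^{2n}+p^n+1)-\tfrac{p^{2n}+p^n}{2}-(p^n+1)=\tfrac{p^{2n}-p^n}{2},
$$
and assembling the three families of cliques with the central join yields the stated graph.

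The conceptual steps are routine once this viewpoint is fixed, so the main work is bookkeeping, and I expect two points to require the most care. First, $\rng{A}_1=GF(p)[A]$ is in general a proper subring of the $\F$-algebra $\F[A]$, so inside each $S$ one must count one-generated unital subrings in the sense of $v_1(S)$ rather than $\F$-subalgebras, and one must verify that the $d(n)$ central vertices are common to all $S$ and identified with the global center so they are not overcounted. Second, the enumeration of subalgebras of each type is the real obstacle; I would avoid a direct count of irreducible quadratics and conjugacy classes for the field type and instead obtain it cleanly by the geometric line-counting for the split and local types together with subtraction from the known total $p^{2n}+p^n+1$.
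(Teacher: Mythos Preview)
Your proof is correct and follows the same overall architecture as the paper: peel off the center as a $K^\circ_{d(n)}$ join, observe that non-central vertices partition into cliques indexed by the maximal commutative subalgebras $\F[A]$, classify these into the three types (split, local, field), and read off the clique sizes $a(n),b(n),c(n)$ from Theorems~\ref{thm:direct_1}, \ref{th:field} and Proposition~\ref{thm:poly-unital} as $v_1(S)-d(n)$.

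The only real difference is in the enumeration of the three types. The paper counts them by characterizing each type via a distinguished element of the centralizer (nontrivial idempotent, nonzero nilpotent, companion matrix of an irreducible quadratic), then counts those elements via the orbit-stabilizer formula $|GL_2(\F):\mathcal U(\CC(A))|$ and divides by the number per centralizer. Your geometric count---unordered pairs of lines for the split type, a single line $\ker N=\operatorname{im}N$ for the local type, and subtraction from the known total $p^{2n}+p^n+1$ for the field type---is equally valid and arguably more direct; it avoids the group-theoretic quotients entirely. Either way the numbers agree, and your flagged care points (that $\rng{A}_1=GF(p)[A]$ is typically a proper subring of $\F[A]$, and that the $d(n)$ scalar vertices are shared across all $S$ and must not be double-counted) are exactly the ones that need attention.
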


\begin{proof}
Let $R=\mm_2(GF(p^n))$. Note that the center of $R$ is $Z(R)= GF(p^n)I$, where $I$ is the identity matrix. For any non central element $A \in R$ its centralizer $\CC(A)=GF(p^n)[A]$ is commutative and $\CC(A)\setminus Z(R)$ is a clique in $\Gamma(R)$ of size $p^{2n}-p^n$.
Since $\Gamma(R) = (p^{2n} + p^n + 1)K_{p^{2n}-p^n}$, we need to show that the unital compressed commuting graph of $Z(R)$ is $K_{d(n)}^\circ$ and that some of the cliques of $\Gamma(R)$ are compressed into $K_{a(n)}^\circ$, some into $K_{b(n)}^\circ$, and some into $K_{c(n)}^\circ$.
Since $Z(R)$ is the field $GF(p^n)$ the first claim follows from Theorem~\ref{th:field}.

So assume that $A\in R\setminus Z(R)$. Since we are in $2 \times 2$ matrices, the minimal polynomial of $A$ over $GF(p^n)$ is equal to its characteristic polynomial.
We have three possible types of centralizers for $A$: (A) the minimal polynomial of $A$ has two distinct roots in $GF(p^n)$, (B) the minimal polynomial of $A$ has two equal roots in $GF(p^n)$, (C) the minimal polynomial of $A$ is irreducible over $GF(p^n)$. We now consider each case.

\noindent{\bf Case~(A).}
If the minimal polynomial of $A$ has two distinct roots in $GF(p^n)$, then $A$ is similar to a diagonal matrix. Hence,
$$\CC(A)\cong \mathcal{D}_2( GF(p^n)) :=\big\{
 \begin{bmatrix}
 a& 0\\
 0 & b\\
\end{bmatrix} \in R \mid a,b\in GF(p^n)\big\}\cong GF(p^n)\times GF(p^n),$$
so the clique $\CC(A) \setminus Z(R)$ compresses into $K_{a(n)}^\circ$, where
$$a(n)=v_1(GF(p^n)\times GF(p^n))-v_1(GF(p^n)).$$
By Theorem~\ref{thm:direct_1} it follows that $a(n)=d(n)^2+\sigma(n)-1-d(n)$ if $p=2$ and $n$ is even and $a(n)=d(n)^2+\sigma(n)-d(n)$ otherwise.
Note that Case~(A) happens if and only if $\CC(A)= GF(p^n)[A]$ contains a nontrivial idempotent element. Indeed, if $\CC(A)$ contains a nontrivial idempotent $E$, then $\CC(A)=\CC(E) \cong \mathcal{D}_2(GF(p^n))$. We denote by $E_{ij}$ the matrix having $1$ as $ij$-th entry and $0$'s elsewhere. Since every nontrivial idempotent is similar to $E_{11}$, we have $|GL_2( GF(p^n)){\,:\,}\mathcal{U}(\mathcal{D}_2( GF(p^n)))|$ nontrivial idempotents, where $\mathcal{U}(\mathcal{D}_2(GF(p^n)))$ is the group of units in $\mathcal{D}_2(GF(p^n))$. We have
$$|GL_2(GF(p^n))|=(p^{2n}-1)(p^{2n}-p^n) \quad \text{and} \quad |\mathcal{U}(\mathcal{D}_2(GF(p^n)))|=(p^n-1)^2,$$
so that there are $p^n(p^n+1)$ nontrivial idempotents in $R$.
Every centralizer in Case~(A) contains two nontrivial idempotents, so there are $\frac{p^n(p^n+1)}{2}$ such  centralizers.

\noindent{\bf Case~(B).} If the minimal polynomial of $A$ has two equal roots, then $A$ is similar to a Jordan block. Hence,
$$\CC(A)\cong \mathcal{T}_2( GF(p^n)) :=\big\{
 \begin{bmatrix}
 a& b\\
 0 & a\\
 \end{bmatrix} \in R \mid a,b\in GF(p^n)\big\}\cong GF(p^n)[x]/(x^2),$$
so the clique $\CC(A) \setminus Z(R)$ compresses into $K_{b(n)}^\circ$, where
$$b(n)=v_1(GF(p^n)[x]/(x^2))-v_1(GF(p^n)).$$
By Theorem~\ref{thm:poly-unital} it follows that 
$b(n)=\sum_{d|n} \frac{p^n-1}{p^d-1}.$
Note that Case~(B) happens if and only if $\CC(A)= GF(p^n)[A]$ contains a nonzero nilpotent element. Since every nonzero nilpotent element is similar to
$E_{12}$, we have
$$|GL_2(GF(p^n)){\,:\,}\mathcal{U}(\mathcal{T}_2( GF(p^n)))|=\frac{(p^{2n}-1)(p^{2n}-p^n)}{p^n(p^n-1)}=p^{2n}-1$$
nonzero nilpotent elements. Every centralizer in Case~(B) contains $p^n-1$ nonzero nilpotent elements, so there are $p^n+1$ such centralizers.

\noindent{\bf Case~(C).} If the minimal polynomial $m_A(x)=x^2+\alpha x+\beta$ of $A$ is irreducible over $GF(p^n)$ then $A$ is similar to the companion matrix of $m_A$, i.e.
$\begin{bmatrix}
 0& -\beta\\
 1& -\alpha\\
\end{bmatrix}$.
Furthermore,
$$\CC(A)= GF(p^n)[A] \cong GF(p^n)[x]/(m_A(x))
\cong GF(p^{2n}),$$
since the ideal $(m_A(x))$ is prime in $GF(p^n)[x]$, so that $GF(p^n)[x]/(m_A(x))$ is a field.
Hence, the clique $\CC(A) \setminus Z(R)$ compresses into $K_{c(n)}^\circ$, where 
$$c(n)=v_1(GF(p^{2n}))-v_1(GF(p^n))=d(2n)-d(n).$$
Note that Case~(C) happens if and only if $\CC(A)= GF(p^n)[A]$ contains a companion matrix of some irreducible polynomial of degree $2$. Indeed, if
$A=\begin{bmatrix}
   a & b\\c & d 
\end{bmatrix},$
then $c \ne 0$ since $m_A$ is irreducible, hence, $A'=c^{-1}(A-aI) \in \CC(A)$ is the companion matrix whose minimal polynomial is automatically irreducible since $\CC(A')=\CC(A)$. 
Furthermore, if matrix $B$ is the companion matrix of polynomial $P(x)=x^2+\alpha x+\beta$, then 
$$\CC(B)= GF(p^n)[B] = \big\{\begin{bmatrix}
  a& -b\beta\\
  b & a-b\alpha\\
   \end{bmatrix} \in R \mid a,b\in GF(p^n)\big\},$$
   which contains only one companion matrix. 
It follows that every centralizer in Case (C) contains precisely one companion matrix of an irreducible polynomial, so there are $\frac{p^n(p^n-1)}{2}$ such centralizers. This finishes the proof. 
\end{proof}

\begin{theorem}
Let $n$ be an integer, $p$ a prime, and $GF(p^n)$ the field with $p^n$ elements. Let 
\begin{align*}
   a'(n) &= \begin{cases}
    d(n)^2 + d(n) + \sigma(n)-2; & \text{if } p =2 \text{ and } n \text{ is even}, \\
    d(n)^2 + d(n) + \sigma(n)-1; & \text{if } p =2 \text{ and } n \text{ is odd},  \\
    d(n)^2 + d(n) + \sigma(n); & \text{if } p >2,     
    \end{cases}\\
   b'(n) &= \frac{p^n-1}{p-1}+\sum_{d | n} \frac{p^n-1}{p^d-1}, \quad \text{ and } \quad c(n) = d(2n)-d(n).
\end{align*} 
Then the unital compressed commuting graph of the ring $\mm_2(GF(p^n))$ is 
$$ \Lambda(\mm_2(GF(p^n))) \cong K_{d(n)+1}^\circ \vee \left(\tfrac{p^{2n} + p^n}{2}K_{a'(n)}^\circ {\textstyle\cup} (p^n + 1)K_{b'(n)}^\circ {\textstyle\cup} \tfrac{p^{2n} - p^n}{2}K_{c(n)}^\circ\right) .$$
\end{theorem}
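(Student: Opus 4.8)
The plan is to follow the proof of Theorem~\ref{th:M2-unital} almost verbatim, replacing the unital counting function $v_1$ by its nonunital analogue $v$ throughout. Write $R=\mm_2(GF(p^n))$, so that $Z(R)=GF(p^n)I\cong GF(p^n)$ and $\Gamma(R)=(p^{2n}+p^n+1)K_{p^{2n}-p^n}$ as recalled above. By the general description of $\Lambda(R)$ in terms of $\Gamma(R)$ (compress the centralizer cliques, add all loops, and take the join with $\Lambda(Z(R))$), the graph $\Lambda(R)$ is the join of $\Lambda(Z(R))$ with a disjoint union of complete graphs with all loops, one for each centralizer clique $\CC(A)\setminus Z(R)$. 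Since $Z(R)\cong GF(p^n)$, Theorem~\ref{th:field} gives $\Lambda(Z(R))\cong K_{d(n)+1}^\circ$, which accounts for the leading factor of the join.

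First I would record the key counting principle. For a noncentral $A$ the subrings $\rng{X}$ with $X\in\CC(A)$ split into those generated by a central element $X\in Z(R)$ and those generated by a noncentral element; since $\rng{X}=\rng{Y}$ forces $X\in\rng{Y}$, a subring generated by a central element is never generated by a noncentral one. Moreover, subrings arising in two different centralizers never coincide: if $\rng{X}=\rng{Y}$ with $X$ noncentral, then $Y\in\rng{X}\subseteq\CC(X)=\CC(A)$, using that $\CC(X)=\CC(A)$ for noncentral $X\in\CC(A)=GF(p^n)[A]$. Hence the clique $\CC(A)\setminus Z(R)$ compresses into $K^\circ_{v(\CC(A))-v(Z(R))}$, where $v(Z(R))=v(GF(p^n))=d(n)+1$ by Theorem~\ref{th:field}.

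Next I would run through the same three cases as in Theorem~\ref{th:M2-unital}, classifying $A\in R\setminus Z(R)$ by the factorization type of its (minimal = characteristic) polynomial over $GF(p^n)$. In Case~(A) one has $\CC(A)\cong GF(p^n)\times GF(p^n)$, so the compressed clique has $a'(n)=v(GF(p^n)\times GF(p^n))-(d(n)+1)$ vertices; substituting the value from Theorem~\ref{thm:direct} (with $s=\gcd(n,n)=n$) and simplifying yields the stated three-branch formula for $a'(n)$. In Case~(B) one has $\CC(A)\cong GF(p^n)[x]/(x^2)$, giving $b'(n)=v(GF(p^n)[x]/(x^2))-(d(n)+1)$, which together with Proposition~\ref{thm:poly} collapses to $b'(n)=\frac{p^n-1}{p-1}+\sum_{d\mid n}\frac{p^n-1}{p^d-1}$. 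In Case~(C) one has $\CC(A)\cong GF(p^{2n})$, so $c(n)=v(GF(p^{2n}))-(d(n)+1)=(d(2n)+1)-(d(n)+1)=d(2n)-d(n)$ by Theorem~\ref{th:field}; here the two zero-subring contributions cancel, so $c(n)$ is unchanged from the unital case.

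Finally, the number of centralizers of each type depends only on $\Gamma(R)$ and not on the chosen compression, so it is exactly as computed in the proof of Theorem~\ref{th:M2-unital}: there are $\frac{p^{2n}+p^n}{2}$ centralizers of type~(A), $p^n+1$ of type~(B), and $\frac{p^{2n}-p^n}{2}$ of type~(C), accounting for all $p^{2n}+p^n+1$ cliques. Taking the join of $K_{d(n)+1}^\circ$ with the corresponding disjoint union of compressed cliques then gives the asserted form of $\Lambda(\mm_2(GF(p^n)))$. I expect the main obstacle to be purely bookkeeping: verifying that the central subrings inside each $\CC(A)$ are exactly the $d(n)+1$ subrings of $Z(R)$ (so that the clique compresses to $v(\CC(A))-(d(n)+1)$ and not $v(\CC(A))$ vertices), and carrying out the algebraic simplifications of $a'(n)$, $b'(n)$, and $c(n)$ from Theorems~\ref{thm:direct},~\ref{thm:poly}, and~\ref{th:field}.
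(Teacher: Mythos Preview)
Your proposal is correct and follows essentially the same approach as the paper: the paper's own proof simply says that the arguments are identical to those of Theorem~\ref{th:M2-unital} with $v$ in place of $v_1$, and then records $a'(n)=v(GF(p^n)\times GF(p^n))-v(GF(p^n))$, $b'(n)=v(GF(p^n)[x]/(x^2))-v(GF(p^n))$, and $c'(n)=v(GF(p^{2n}))-v(GF(p^n))$ via Theorems~\ref{thm:direct}, \ref{thm:poly}, and~\ref{th:field}. You spell out a few points the paper leaves implicit (that central and noncentral generators never yield the same subring, and that distinct centralizer cliques compress disjointly), but the overall strategy, the three-case split, and the centralizer counts are identical.
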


\begin{proof}
The arguments are the same as in the proof of Theorem \ref{th:M2-unital}. The only difference is in the sizes of complete graphs. First of all, $v(Z(R)) = v(GF(p^n)) = d(n) + 1$.
Furthermore, we use Theorem~\ref{thm:direct} to obtain
\begin{align*}
  a'(n)&=v(GF(p^n)\times GF(p^n))-v(GF(p^n)) \\
  &=\begin{cases}
    d(n)^2 + d(n) + \sigma(n)-2; & \text{if } p =2 \text{ and } n \text{ is even}, \\
    d(n)^2 + d(n) + \sigma(n)-1; & \text{if } p =2 \text{ and } n \text{ is odd},  \\
    d(n)^2 + d(n) + \sigma(n); & \text{if } p >2,     
    \end{cases}  
\end{align*}
we use Theorem~\ref{thm:poly} to obtain
$$b'(n)=v(GF(p^n)[x]/(x^2))-v(GF(p^n))=\frac{p^n-1}{p-1}+\sum_{d | n} \frac{p^n-1}{p^d-1},$$
and we have
\begin{equation*}
    c'(n)=v(GF(p^{2n}))-v(GF(p^n))=d(2n)-d(n)=c(n). 
\end{equation*}
\end{proof}

As already mentioned, for any (unital) ring $R$, the vertex set of its (unital) compressed commuting graph  is the set of (unital) subrings of $R$ generated by one element. So the number of vertices $v(R)$ (resp. $v_1(R)$) in  $\Lambda(R)$ (resp. $\Lambda^1(R)$) equals the number of (unital) subrings of $R$ generated by one element. The following corollary gives the number of (unital) subrings generated by one element of the ring of $2 \times 2$ matrices over small finite fields. 

\begin{table}[ht]
\begin{tabular}{|c||c|c|} \hline
    $\F$ & $v(R)$ & $v_1(R)$ \\ \hline \hline
    $GF(2)$ & $15$ & $8$ \\ \hline
    $GF(p), p > 2$ & $2p^2+3p+4$ & $p^2+p+2$ \\ \hline
    $GF(2^2)$ & $114$ & $68$ \\ \hline
    $GF(p^2), p>2$ & $5p^4 + 2p^3+ 7p^2 + 2p + 6$ & $3p^4 + p^3+ 4p^2 + p + 4$ \\ \hline
\end{tabular}\vspace{3mm}
\caption{Sizes of (unital) compressed commuting graphs of $R=\mm_2(\F)$ for small fields $\F$.} \label{table1}
\end{table}

\begin{corollary}
Let $\F$ be a small finite field and $R = \mm_2(\F)$. Table~\ref{table1} gives the number of (unital) subrings of $R$ generated by one element.
\end{corollary}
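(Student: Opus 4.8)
The plan is to read off each table entry directly from the two preceding theorems by specializing to $n=1$ (giving $\F=GF(p)$) and $n=2$ (giving $\F=GF(p^2)$). Since both $\Lambda^1(\mm_2(GF(p^n)))$ and $\Lambda(\mm_2(GF(p^n)))$ are displayed as joins of disjoint unions of complete graphs, and a join over a disjoint union simply glues all the vertex sets together, the vertex counts are obtained by summing the orders of all the constituent cliques. Concretely, I would first record the two master formulas
$$v_1(\mm_2(GF(p^n))) = d(n) + \tfrac{p^{2n}+p^n}{2}\,a(n) + (p^n+1)\,b(n) + \tfrac{p^{2n}-p^n}{2}\,c(n)$$
and
$$v(\mm_2(GF(p^n))) = (d(n)+1) + \tfrac{p^{2n}+p^n}{2}\,a'(n) + (p^n+1)\,b'(n) + \tfrac{p^{2n}-p^n}{2}\,c(n),$$
coming from Theorem~\ref{th:M2-unital} and the theorem that follows it, and then substitute the relevant arithmetic data.

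First I would handle $n=1$. Here $d(1)=1$, $d(2)=2$, $\sigma(1)=1$, so that $c(1)=1$, $b(1)=1$, and $b'(1)=2$. Because $n=1$ is odd, the relevant branch of $a(n)$ gives $a(1)=1$ for every prime $p$, while $a'(1)=3$ for $p>2$ and $a'(1)=2$ for $p=2$ (the $p=2$, $n$ odd branch). Substituting into the two master formulas and simplifying yields $v_1=p^2+p+2$ and, for $p>2$, $v=2p^2+3p+4$; evaluating these (using the $p=2$ branch of $a'$ for the second) at $p=2$ returns $v_1=8$ and $v=15$, matching the first two rows of the table.

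Next I would treat $n=2$, where $d(2)=2$, $d(4)=3$, $\sigma(2)=3$, so $c(2)=1$, $b(2)=p+2$, and $b'(2)=2p+3$. Since $n=2$ is even, I must take the even branches: for $p>2$ one has $a(2)=5$ and $a'(2)=9$, while for $p=2$ the $p=2$, $n$ even branches give $a(2)=4$ and $a'(2)=7$. Plugging these into the master formulas and collecting terms gives, for $p>2$, $v_1=3p^4+p^3+4p^2+p+4$ and $v=5p^4+2p^3+7p^2+2p+6$, and substituting $p=2$ into the $p=2$ values produces $v_1=68$ and $v=114$, reproducing the remaining two rows.

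The computation is entirely routine substitution, so there is no real conceptual obstacle; the only point demanding care is the case split in $a(n)$ and $a'(n)$. One must remember that $GF(2)$ and $GF(2^2)$ fall into the special $p=2$ branches (the odd branch for $n=1$ and the even branch for $n=2$), so the $p=2$ entries cannot be obtained by naively substituting $p=2$ into the $p>2$ polynomials — indeed, for $v(R)$ at $p=2$ the formula $2p^2+3p+4$ would give $18$ rather than the correct $15$. Keeping the branches straight and simplifying each of the resulting expressions is all that remains.
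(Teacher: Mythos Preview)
Your proposal is correct and is exactly the intended argument: the paper offers no separate proof of this corollary, treating it as an immediate consequence of the two preceding theorems, and your substitution of $n=1$ and $n=2$ into the vertex-count formulas (with the appropriate $p=2$ case distinctions for $a(n)$ and $a'(n)$) recovers every entry of Table~\ref{table1}.
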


Our last corollary describes asymptotic behavior of the number of (unital) subrings of $\mm_2(GF(p^n))$ generated by one element for large $n$.

\begin{corollary}
Let $\F=GF(p^n)$ be a finite field and $R = \mm_2(\F)$. Then for every prime $p$ the number of (unital) subrings of $R$ generated by one element satisfies
$$\lim_{n \to \infty}\frac{v(R)}{\tfrac12\sigma(n)|\F|^2} = \lim_{n \to \infty}\frac{v_1(R)}{\tfrac12\sigma(n)|\F|^2} = 1 ,$$
and thus
$$\limsup_{n \to \infty}\frac{v(R)}{p^{2n} n \log \log n} = \limsup_{n \to \infty}\frac{v_1(R)}{p^{2n} n \log \log n} = \tfrac12 e^{\gamma} ,$$
where $\gamma \approx 0.5772$ is Euler's gamma constant.
\end{corollary}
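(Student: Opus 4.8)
The plan is to work directly from the closed-form expressions for $v_1(R)$ and $v(R)$ supplied by the two preceding theorems, isolate the single dominant summand, and reduce everything to the asymptotics of $\sigma(n)$, at which point the classical theorem of Gronwall on the maximal order of the sum-of-divisors function finishes the argument. The total vertex count is obtained by adding, over all cliques of the join, the number of vertices in each complete graph; for the unital graph this gives
\[v_1(R)=d(n)+\tfrac{p^{2n}+p^n}{2}\,a(n)+(p^n+1)\,b(n)+\tfrac{p^{2n}-p^n}{2}\,c(n),\]
and the analogous identity for $v(R)$ has $d(n)+1$, $a'(n)$, $b'(n)$, $c(n)$ in place of $d(n)$, $a(n)$, $b(n)$, $c(n)$. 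Since $a(n)=\sigma(n)+d(n)^2-d(n)+O(1)$ and $a'(n)=\sigma(n)+d(n)^2+d(n)+O(1)$, the quantity $\tfrac{p^{2n}}{2}\sigma(n)=\tfrac12\sigma(n)|\F|^2$ is the candidate leading term, and the whole point is to show that every remaining contribution is $o(\sigma(n)p^{2n})$.

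To establish the first pair of limits I would divide $v_1(R)$ by $\tfrac12\sigma(n)p^{2n}$ and check the four resulting summands one at a time. The leading term becomes $(1+p^{-n})\,a(n)/\sigma(n)$, and since the divisor bound $d(n)=n^{o(1)}$ yields $d(n)^2=o(n)=o(\sigma(n))$, we get $a(n)/\sigma(n)\to 1$, so this term tends to $1$. The term coming from $c(n)=d(2n)-d(n)$ tends to $0$ because $c(n)=n^{o(1)}=o(\sigma(n))$, and the constant $d(n)$ is annihilated by the factor $p^{2n}$. The same estimates, using $a'(n)/\sigma(n)\to 1$, handle $v(R)$.

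The one comparison that requires actual care—and the step I would flag as the main (if modest) obstacle—is the $b(n)$ term, because unlike $d(n)$ and $c(n)$ it is exponentially large: the estimate $\tfrac{p^n-1}{p-1}\le b(n)\le d(n)\cdot\tfrac{p^n-1}{p-1}$ shows that $(p^n+1)b(n)$ already has exponential order $p^{2n}$, the same as the main term. It is negligible only because the main term carries the extra unbounded factor $\sigma(n)\sim a(n)$: indeed $(p^n+1)b(n)=O\!\left(d(n)\,p^{2n}\right)$, and dividing by $\tfrac12\sigma(n)p^{2n}$ leaves $O\!\left(d(n)/\sigma(n)\right)\to 0$, since $d(n)=n^{o(1)}$ while $\sigma(n)\ge n$. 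The identical bound applies to $b'(n)=\tfrac{p^n-1}{p-1}+b(n)=O(d(n)p^n)$. Summing the four limits gives $v_1(R)\sim v(R)\sim\tfrac12\sigma(n)|\F|^2$, which is the first displayed equality.

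Finally, for the $\limsup$ statement I would invoke Gronwall's theorem, $\limsup_{n\to\infty}\sigma(n)/(n\log\log n)=e^{\gamma}$. Writing $v(R)=\tfrac12\sigma(n)p^{2n}(1+o(1))$ and likewise for $v_1(R)$, we have $\frac{v(R)}{p^{2n}n\log\log n}=(1+o(1))\cdot\tfrac12\cdot\frac{\sigma(n)}{n\log\log n}$; since multiplying a nonnegative sequence by a factor tending to $1$ does not change its $\limsup$, the right-hand side has $\limsup$ equal to $\tfrac12 e^{\gamma}$, and the same holds for $v_1(R)$. This completes the proof.
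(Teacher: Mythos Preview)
Your proof is correct and follows essentially the same route as the paper: you start from the same closed-form decomposition of $v_1(R)$ (and $v(R)$), identify $\tfrac12\sigma(n)p^{2n}$ as the dominant term via $a(n)/\sigma(n)\to 1$, bound the $b(n)$ contribution by $O(d(n)p^{2n})$, and then apply Gronwall's theorem. The only cosmetic difference is that the paper justifies $d(n)^2=o(\sigma(n))$ by a short logarithmic manipulation from Hardy--Wright's Theorem~317, whereas you invoke the equivalent standard bound $d(n)=n^{o(1)}$ directly.
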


\begin{proof}
Fix a prime number $p$. It follows from Theorem~\ref{th:M2-unital} that
$$v_1(R)=d(n)+\tfrac{p^{2n} + p^n}{2} a(n) + (p^n + 1)b(n) + \tfrac{p^{2n} - p^n}{2}c(n).$$
We will first prove that
\begin{equation}\label{eq:d2}
    \lim_{n \to \infty} \frac{d(n)^2}{\sigma(n)}=0.
\end{equation}
By \cite[Theorem~317]{HaWr08} we have
$$\limsup_{n \to \infty} \frac{\log d(n) \cdot \log\log n}{\log n}=\log 2,$$
so for $n$ sufficiently large we get
$$\frac{\log d(n) \cdot \log\log n}{\log n}<1.$$
Since $\sigma(n) > n$, we obtain
$$\frac{\log d(n)^2}{\log\sigma(n)}<\frac{2\log n}{\log\sigma(n) \cdot \log\log n} < \frac{2}{\log\log n}<\frac12,$$
for $n$ sufficiently large. This implies $d(n)^2 <\sqrt{\sigma(n)}$ so that \eqref{eq:d2} follows. Equation \eqref{eq:d2} immediately implies
$$\lim_{n \to \infty}\frac{a(n)}{\sigma(n)}=1 \qquad \text{and} \qquad \lim_{n \to \infty}\frac{c(n)}{\sigma(n)}=0,$$
since $c(n) \le d(n)$.
Furthermore, $b(n) \le d(n) \cdot p^n$.
Combining all the above, we infer
$$\lim_{n \to \infty} \frac{v_1(R)}{\tfrac12\sigma(n)|\F|^2} =\lim_{n \to \infty} \left(\frac{d(n)}{\tfrac12\sigma(n)p^{2n}}+(1+\tfrac{1}{p^n})\frac{a(n)}{\sigma(n)}+2(1+\tfrac{1}{p^n})\frac{b(n)}{p^n\sigma(n)}+(1-\tfrac{1}{p^n})\frac{c(n)}{\sigma(n)}\right)=1.$$
By \cite[Theorem~323]{HaWr08} we have
$$\limsup_{n \to \infty} \frac{\sigma(n)}{n \log\log n}=e^\gamma$$
so that
$$\limsup_{n \to \infty}\frac{v_1(R)}{p^{2n} n \log \log n} =\limsup_{n \to \infty} \frac{v_1(R)}{\tfrac12\sigma(n)p^{2n}} \cdot \frac{\tfrac12\sigma(n)}{n \log \log n} = \tfrac12 e^{\gamma}.$$
The proof for $v(R)$ is similar.
\end{proof}

\bibliographystyle{amsplain}

\begin{thebibliography}{10}

\bibitem{AbKeCh}
J.~Abawajy, A.~Kelarev, and M.~Chowdhury, \emph{Power graphs: a survey},
  Electron. J. Graph Theory Appl. (EJGTA) \textbf{1} (2013), no.~2, 125--147.
  \MR{3145411}

\bibitem{AbZa10}
A.~Abdollahi and M.~Zarrin, \emph{Non-nilpotent graph of a group}, Comm.
  Algebra \textbf{38} (2010), no.~12, 4390--4403. \MR{2764826}

\bibitem{AkBiMo}
S.~Akbari, H.~Bidkhori, and A.~Mohammadian, \emph{Commuting graphs of matrix
  algebras}, Comm. Algebra \textbf{36} (2008), no.~11, 4020--4031. \MR{2460400}

\bibitem{AkGhHaMo04}
S.~Akbari, M.~Ghandehari, M.~Hadian, and A.~Mohammadian, \emph{On commuting
  graphs of semisimple rings}, Linear Algebra Appl. \textbf{390} (2004),
  345--355. \MR{2083665}

\bibitem{AkMoRaRa06}
S.~Akbari, A.~Mohammadian, H.~Radjavi, and P.~Raja, \emph{On the diameters of
  commuting graphs}, Linear Algebra Appl. \textbf{418} (2006), no.~1, 161--176.
  \MR{2257587}

\bibitem{AnBa}
D.~F. Anderson and A.~Badawi, \emph{The total graph of a commutative ring}, J.
  Algebra \textbf{320} (2008), no.~7, 2706--2719. \MR{2441996}

\bibitem{AnLi}
D.~F. Anderson and P.~S. Livingston, \emph{The zero-divisor graph of a
  commutative ring}, J. Algebra \textbf{217} (1999), no.~2, 434--447.
  \MR{1700509}

\bibitem{ArKiKo}
J.~Ara\'{u}jo, M.~Kinyon, and J.~Konieczny, \emph{Minimal paths in the
  commuting graphs of semigroups}, European J. Combin. \textbf{32} (2011),
  no.~2, 178--197. \MR{2738539}

\bibitem{ArCaRa24}
G.~Arunkumar, P.~J. Cameron, and R.~K. Nath, \emph{Super graphs on groups,
  {II}}, Discrete Appl. Math. \textbf{359} (2024), 371--382. \MR{4798847}

\bibitem{ArCaNaSe22}
G.~Arunkumar, P.~J. Cameron, R.~K. Nath, and L.~Selvaganesh, \emph{Super graphs
  on groups, {I}}, Graphs Combin. \textbf{38} (2022), no.~3, Paper No. 100, 14.
  \MR{4426775}

\bibitem{BhNoNa20}
P.~Bhowal, D.~Nongsiang, and R.~K. Nath, \emph{Solvable graphs of finite
  groups}, Hacet. J. Math. Stat. \textbf{49} (2020), no.~6, 1955--1964.
  \MR{4199133}

\bibitem{BhNoNa22}
\bysame, \emph{Non-solvable graphs of groups}, Bull. Malays. Math. Sci. Soc.
  \textbf{45} (2022), no.~3, 1255--1272. \MR{4412320}

\bibitem{BrFo}
R.~Brauer and K.~A. Fowler, \emph{On groups of even order}, Ann. of Math. (2)
  \textbf{62} (1955), 565--583. \MR{74414}

\bibitem{DaNo15}
A.~K. Das and D.~Nongsiang, \emph{On the genus of the nilpotent graphs of
  finite groups}, Comm. Algebra \textbf{43} (2015), no.~12, 5282--5290.
  \MR{3395704}

\bibitem{DuJeSt21a}
A.~Djuri\'{c}, S.~Jevdjeni\'{c}, and N.~Stopar, \emph{Categorial properties of
  compressed zero-divisor graphs of finite commutative rings}, J. Algebra Appl.
  \textbf{20} (2021), no.~5, Paper No. 2150069, 16. \MR{4255745}

\bibitem{DuJeSt21b}
\bysame, \emph{Compressed zero-divisor graphs of matrix rings over finite
  fields}, Linear Multilinear Algebra \textbf{69} (2021), no.~11, 2012--2039.
  \MR{4283119}

\bibitem{DoGuKuOb13}
G.~Dolinar, A.~Guterman, B.~Kuzma, and P.~Oblak, \emph{Extremal matrix
  centralizers}, Linear Algebra Appl. \textbf{438} (2013), no.~7, 2904--2910.
  \MR{3018047}

\bibitem{DoD19}
D.~Dol\v{z}an, \emph{The commuting graphs of finite rings}, Publ. Math.
  Debrecen \textbf{95} (2019), no.~1-2, 123--131. \MR{3998029}

\bibitem{DoKoKu18}
D.~Dol\v{z}an, D.~Kokol~Bukov\v{s}ek, and B.~Kuzma, \emph{On the lower bound
  for diameter of commuting graph of prime-square sized matrices}, Filomat
  \textbf{32} (2018), no.~17, 5993--6000. \MR{3899333}

\bibitem{DoH19}
H.~R. Dorbidi, \emph{On a conjecture about the commuting graphs of finite
  matrix rings}, Finite Fields Appl. \textbf{56} (2019), 93--96. \MR{3883282}

\bibitem{DoMa24}
H.~R. Dorbidi and R.~Manaviyat, \emph{The commuting graph of the ring
  {${M}_3({F}_q)$}}, Linear Multilinear Algebra \textbf{72} (2024), no.~1,
  25--30. \MR{4685140}

\bibitem{DuNa17}
J.~Dutta and R.~K. Nath, \emph{Spectrum of commuting graphs of some classes of
  finite groups}, Matematika (Johor) \textbf{33} (2017), no.~1, 87--95.
  \MR{3682356}

\bibitem{DuNa21}
P.~Dutta and R.~K. Nath, \emph{Various energies of commuting graphs of some
  super integral groups}, Indian J. Pure Appl. Math. \textbf{52} (2021), no.~1,
  1--10. \MR{4285950}

\bibitem{HaWr08}
G.~H. Hardy and E.~M. Wright, \emph{An introduction to the theory of numbers},
  sixth ed., Oxford University Press, Oxford, 2008. \MR{2445243}

\bibitem{Ja53}
N.~Jacobson, \emph{Lectures in abstract algebra. {V}ol. {II}. {L}inear
  algebra}, D. Van Nostrand Co., Inc., Toronto-New York-London, 1953.
  \MR{0053905}

\bibitem{Ku18}
B.~Kuzma, \emph{Dimensions of complex {H}ilbert spaces are determined by the
  commutativity relation}, J. Operator Theory \textbf{79} (2018), no.~1,
  201--211. \MR{3764148}

\bibitem{LiNi94}
R.~Lidl and H.~Niederreiter, \emph{Introduction to finite fields and their
  applications}, {F}irst ed., Cambridge University Press, Cambridge, 1994.
  \MR{1294139}

\bibitem{MacL98}
S.~Mac~Lane, \emph{Categories for the working mathematician}, second ed.,
  Graduate Texts in Mathematics, vol.~5, Springer-Verlag, New York, 1998.
  \MR{1712872}

\bibitem{Mo10}
A.~Mohammadian, \emph{On commuting graphs of finite matrix rings}, Comm.
  Algebra \textbf{38} (2010), no.~3, 988--994. \MR{2650384}

\bibitem{Mu02}
S.~B. Mulay, \emph{Cycles and symmetries of zero-divisors}, Comm. Algebra
  \textbf{30} (2002), no.~7, 3533--3558. \MR{1915011}

\bibitem{Sh16}
Y.~Shitov, \emph{A matrix ring with commuting graph of maximal diameter}, J.
  Combin. Theory Ser. A \textbf{141} (2016), 127--135. \MR{3479240}

\bibitem{Sh30}
K.~Shoda, \emph{\"{U}ber die {E}inheitengruppe eines endlichen {R}inges}, Math.
  Ann. \textbf{102} (1930), no.~1, 273--282. \MR{1512577}

\bibitem{SoWo}
R.~M. Solomon and A.~J. Woldar, \emph{Simple groups are characterized by their
  non-commuting graphs}, J. Group Theory \textbf{16} (2013), no.~6, 793--824.
  \MR{3198719}

\bibitem{WaXi}
D.~Wang and C.~Xia, \emph{Diameters of the commuting graphs of simple {L}ie
  algebras}, J. Lie Theory \textbf{27} (2017), no.~1, 139--154. \MR{3518119}

\end{thebibliography}

\end{document}